\documentclass[12pt]{article}
\usepackage{hyperref}

\newcommand{\RNum}[1]{\uppercase\expandafter{\romannumeral #1\relax}}
\usepackage{amsmath}
\usepackage{hyperref}
\usepackage{amssymb}
\usepackage{latexsym}
\usepackage{epsf}
\usepackage{supertabular}
\usepackage{rotating}
\usepackage{multicol}
\usepackage{multirow}
\usepackage{epsfig}
\usepackage{arydshln}
\usepackage{fancyhdr}       
\usepackage{indentfirst}
\newif\ifger
\gerfalse

\newtheorem{theorem}{Theorem}[section]

\newtheorem{lemma}[theorem]{Lemma}

\newtheorem{corollary}[theorem]{Corollary}

\newtheorem{proposition}[theorem]{Proposition}

\title{Reduction for flag-transitive symmetric $2$-designs with $\gcd(k,\lambda)=p$ }

\author{
	Ziwei Lu$^{1}$\footnote{Corresponding author. lzw19980928@163.com}, Yu Sun$^{2}$, Shenglin Zhou$^{2}$ \\[4pt]
	\small
	\parbox{0.9\textwidth}{\centering
		$^{1}$ \textit{School of Artificial Intelligence, Hubei Polytechnic University,} \\
		\hspace*{1.2em} \textit{Huangshi 435003, China}\\[4pt]
		$^{2}$ \textit{School of Mathematics, South China University of Technology,} \\
		\hspace*{1.2em} \textit{Guangzhou 510641, China} \\
	}
}

\begin{document}
	\baselineskip=19pt
	\date{}
	\maketitle

\begin{abstract}
		We study symmetric $2$-$(v,k,\lambda)$ designs admitting a flag-transitive point-primitive subgroup $G$ of automorphisms, where $\gcd(k,\lambda)=p$ and $p$ is a prime. 
		In this paper, we show that $G$ is of affine or almost simple type.
		
		\medskip
		\noindent{\bf Mathematics Subject Classification (2010):} 05B05, 05B25, 20B25
		
		\medskip
		\noindent{\bf Keywords:} symmetric 2-design, flag-transitivity, primitivity
	\end{abstract}

	\section{Introduction}
	
	\indent A symmetric $2$-$(v, k, \lambda)$ design $\mathcal{D}=(\mathcal{P}, \mathcal{B})$ is a set $\mathcal{P}$ of $v$ points, together with a block collection $\mathcal{B}$ consisting of some $k$-subsets of $\mathcal{P}$, such that the number $v$ of points is equal to the number $b$ of blocks and any two distinct points of $\mathcal{P}$ are contained in $\lambda$ blocks. Since all the blocks have the same size $k$, then the number of blocks containing a given point is a constant, denoted by $r$. In a symmetric $2$-design, it is easy to see that $r=k$. A \emph{flag} of $\mathcal{D}$ is a point-block pair $(\alpha,B)$ such that $\alpha\in B.$ If $G\le Aut(\mathcal{D})$ acts transitively on the set of flags, then $G$ is called \emph{flag-transitive}. And the group $G$ is called \emph{point-primitive}, if it acts primitively on the set of points.  Otherwise, $G$ is called \emph{point-imprimitive}.\\
	\indent Dating back to 1968, Dembowski \cite{Dembowski} first pointed out that primitivity can be derived from flag-transitivity under the condition $\gcd(r,\lambda)=1$. Subsequently, a seminal result of Zieschang \cite{1988} showed that, for any $2$-designs $\mathcal{D}$ with $\gcd(r,\lambda)=1$, if $G\le Aut(\mathcal{D})$ is flag-transitive and $T$ is a non-abelian minimal normal subgroup of $G$, then $T$ is simple and $C_{G}(T)=1$. It follows that $G$ is an affine or almost simple group. Namely, the socle of $G$ is an elementary abelian group or a
	non-abelian simple group. This suggests us to consider the flag-transitive $2$-designs with $\gcd(r,\lambda)\neq 1$, especially for the case that $\gcd(r,\lambda)$ is a prime. In 2021, Zhao and Zhou \cite{zhao} studied the flag-transitive 2-designs $\mathcal{D}$ with $\gcd(r,\lambda)=2$ and $\lambda\neq 2$, they proved that the automorphism group $G$ of $\mathcal{D}$ is point-primitive and of affine or almost simple type. Note that, for the case $\lambda=2$, there exists two flag-transitive and point-imprimitive symmetric $2$-$(16,6,2)$ designs (see  \cite{regueiro}).\\
	\indent The purpose of this paper is to investigate symmetric 2-$(v,k,\lambda)$ designs with $\gcd(k,\lambda)=p$, where $p$ is a prime, admitting a flag-transitive point-primitive automorphism group. The outline of the proof is as follows. First, using the relationship between the parameters, we show in Propositions \ref{prop3.11} and \ref{prop3.1} that the automorphism group $G$ of $2$-designs cannot be of simple diagonal and twisted wreath type. Second, we focus on the case that $G$ is of product type.
	 In fact, there are also other studies of symmetric 2-designs relative to the automorphsim group $G$ with product action. For example, O'Reilly-Regueiro and Rodr{\'i}guez-Fitta studied in \cite{2023} that $\mathcal{D}$ is a symmetric 2-design with  a flag-transitive point-primitive automorphism group $G\le H\wr S_2$ of product action type. We will use the results of this article to discuss the case when the automorphism group $G$ of $\mathcal{D}$ has a primitive product action.\\
	 \indent Based on the O'Nan-Scott Theorem for finite primitive groups, we obtained the following reduction theorem of flag-transitive point-primitive symmetric 2-designs with $\gcd(k,\lambda)=p$.
	\begin{theorem}\label{TH1}
		Let $\mathcal{D}$ be a non-trivial symmetric $2$-$(v, k, \lambda)$ design with $\gcd(k,\lambda)=p$, where $p$ is a prime. If $G\le Aut(\mathcal{D})$ is flag-transitive and point-primitive, then the group $G$ is either of affine or almost simple type.
	\end{theorem}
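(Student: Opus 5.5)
We follow the O'Nan--Scott Theorem for the primitive group $G$ acting on $\mathcal{P}$: besides affine and almost simple type, $G$ could be of simple diagonal, product, or twisted wreath type (compound diagonal being absorbed into product action). We rule out the last three in turn. The basic tool is parameter arithmetic for flag-transitive symmetric designs. From $\lambda(v-1)=k(k-1)$ we get $k^2>\lambda v$, so $k^2>v$. Flag-transitivity gives that $G_\alpha$ is transitive on the $k$ blocks through $\alpha$, hence $k\mid |G_\alpha|$. For every non-trivial subdegree $d$ of $G$ we have $k\mid \lambda d$; this holds because each block through $\alpha$ meets a $G_\alpha$-orbit $\Delta$ in a constant number of points, and counting flags $(\beta,B)$ with $\beta\in\Delta$ gives $k\cdot|B\cap\Delta|=\lambda|\Delta|$. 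Since $\gcd(k,\lambda)=p$, writing $k=pk_1$ and $\lambda=p\lambda_1$ with $\gcd(k_1,\lambda_1)=1$, we get $k_1\mid d$, hence $k\mid p\,d$ for every non-trivial subdegree $d$. Combining, $k\mid p\gcd(|G_\alpha|,d_1,d_2,\dots)$, and therefore
\[ v<k^2\le p^2\gcd(d_i)^2 .\]
This is the key inequality, and it differs from the Zieschang situation only by the factor $p^2$.

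\emph{Simple diagonal type.} Here $\mathrm{Soc}(G)=T^m$ with $v=|T|^{m-1}$ and $G_\alpha\le \mathrm{Aut}(T)\times S_m$. There is a known subdegree dividing $m|T|$ (or $m(m-1)\cdots$): the orbit of $\alpha$'s diagonal image under an element of the form $(t,1,\dots,1)$, whose length divides $|T|\cdot m$. We will show $p$ divides both $k$ and this subdegree, and then bound $p\le |T|$ or $p\le m$. For $m\ge 3$ this gives $|T|^{m-1}<p^2 m^2|T|^2$, which fails except for tiny cases. For $m=2$ a finer subdegree is needed: use the conjugacy-class orbits, whose lengths $|x^T|$ have gcd dividing a small number, so that $k\mid p\cdot 2\gcd|x^T|$, and derive a contradiction with $|T|<k^2$. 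This handles Propositions \ref{prop3.11}-type arguments.

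\emph{Twisted wreath type.} Here $G_\alpha$ is isomorphic to a transitive subgroup $P\le S_m$, $v=|T|^m$ and $|G_\alpha|$ divides $|\mathrm{Aut}(T)|^{?}\,m!$; in fact $|G_\alpha|=|P|$, and the point stabilizer is too small. We have $k\mid |P|$ and $k^2>|T|^m$, and Fawcett/Bamberg-type bounds give $|P|<|T|^{m/2}$ for twisted wreath groups (using $T$ embedded in $P$'s point stabilizer, so $m\ge 6$), which yields a contradiction directly. This is Proposition \ref{prop3.1}.

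\emph{Product type.} Here $G\le H\wr S_\ell$ with $H$ primitive of almost simple or diagonal type on $\Delta$, and $v=|\Delta|^\ell$ with $\ell\ge2$. Taking $\alpha=(\delta,\dots,\delta)$, the points differing from $\alpha$ in exactly one coordinate form a $G_\alpha$-invariant set of size $\ell(|\Delta|-1)$, which is a union of suborbits. A flag-counting argument over this union yields $k\mid p\,\ell(|\Delta|-1)$. Then $|\Delta|^\ell<k^2\le p^2\ell^2(|\Delta|-1)^2$. Since $p\mid\lambda$ and $\lambda<k$, this is weak when $p$ is large, so we must also bound $p$. Since $p\mid k$ and $k\mid |G_\alpha|$, $p$ divides $|H_\delta|\cdot \ell!$. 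Refining, if $p\nmid \ell(|\Delta|-1)$ then $k\mid \ell(|\Delta|-1)$ and we are in the Zieschang-style bound, giving $\ell=2$ and the case covered by \cite{2023}; their classification for $G\le H\wr S_2$ then excludes all possibilities with $\gcd(k,\lambda)=p$. If $p\mid \ell(|\Delta|-1)$, we get $\ell<4$ from the inequality, and then treat $\ell=2,3$ using $\lambda(v-1)=k(k-1)$ with $v=|\Delta|^\ell$. We expect this product case to be the main obstacle, in particular controlling $p$ when $\ell=2$, where we would lean on \cite{2023}. Having excluded all three types, $G$ is affine or almost simple.
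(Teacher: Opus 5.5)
\textbf{Verdict: there are genuine gaps.} The simple diagonal case fails for $m\le 5$, and the product case with $\ell=2$ is not handled at all.

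\textbf{The basic inequality and the simple diagonal case.} Your inequality $v<k^2\le p^2\gcd(d_i)^2$ discards the factor $\lambda$ in $\lambda v<k^2$. Nothing in your outline then bounds $p$ well enough to make it bite, and this is where the simple diagonal case breaks.
\begin{itemize}
\item With a subdegree $d\le m|T|$ and $p\le |T|$ (or $p\le m$), the inequality $|T|^{m-1}<p^2m^2|T|^2$ gives no contradiction for $m\le 5$. For $m=3$ it reads $|T|^2<9p^2|T|^2$, which is vacuous. So your claim that it ``fails except for tiny cases'' for $m\ge 3$ is false.
\item Your $m=2$ bound $|T|<k^2\le 4p^2g^2$ is satisfied by $T=A_5$, $p=5$, even with $g=1$.
\item You give no reason why $p$ should divide the chosen subdegree.
\end{itemize}
The missing ideas are arithmetic ones.
\begin{itemize}
\item Since $\gcd(k^*,\lambda^*)=1$, the identity $\lambda^*(v-1)=k^*(k-1)$ gives $k^*\mid v-1=|T|^{m-1}-1$. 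Hence $\gcd(k^*,|T|)=1$, and $k\mid |G_\alpha|$ forces $k^*\mid m!\,|\mathrm{Out}(T)|$.
\item The index $|N_\alpha:N_{\alpha B}|$ divides $\gcd(k,p(v-1))$, so it equals $p$; the value $1$ is excluded by Zieschang's Frobenius argument. Then $p\mid v$, hence $p\mid k^*-\lambda^*$ and $p<k^*$.
\item After reducing to $p$ odd and $\lambda^*\ge 2$ via \cite{zhao} and \cite{2020}, this yields $|T|^{m-1}=v<pk^{*2}/\lambda^*<\frac12 k^{*3}\le\frac12\bigl(m!\,|\mathrm{Out}(T)|\bigr)^3$. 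This is what actually eliminates $2\le m\le 5$.
\end{itemize}

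\textbf{The product case.}
\begin{itemize}
\item Your dichotomy is backwards: since $p\mid k$, the condition $p\nmid\ell(w-1)$ forces $k\nmid \ell(w-1)$.
\item The inequality $w^\ell<p^2\ell^2(w-1)^2$ with $p\le w$ does not give $\ell<4$; for example $(\ell,w,p)=(5,5,5)$ satisfies it. The paper obtains $\ell\le 3$ only from the sharper $w^\ell<\frac12 pk^{*2}$. It then still needs Lemma~\ref{lemma3.2}(2) and a {\sc Magma} check to remove $\ell=3$ with $(v,k,\lambda)=(10^3,297,88)$.
\item More seriously, $\ell=2$ cannot be delegated to \cite{2023}. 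That paper gives only a parametrisation ($x_1+1\mid k$, or $\frac{x_1}{2}+1\mid k$ with a Menon alternative), not a non-existence result. As the paper notes, the proof of its Theorem~3.7 also contains errors.
\end{itemize}
Closing $\ell=2$ requires the argument of Lemmas~\ref{prop2.6}--\ref{l3.3} and Proposition~\ref{prop3.4}:
\begin{enumerate}
\item exclude the Menon case via $k-\lambda=p^2$ and \cite{lu};
\item prove $k^*=4(x_1+1)$ from $2(w-1)=k^*x$ and Lemma~\ref{lemma3.2}(2); for $k^*=x_1+1$ this also uses the almost simple structure of $H$ and Lemma~\ref{lemma3.3};
\item deduce $\lambda^*x^2=4p-1$;
\item use Lemmas~\ref{lemma2.4} and \ref{l2} to force $|\alpha^{M_B}|=k/2$, and run the divisibility analysis of $c_1,c_2,|B_{10}|,|B_{20}|$;
\item kill $(61^2,280,21)$, $(25^2,144,33)$ and $(57^2,784,189)$ by computer.
\end{enumerate}

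\textbf{Twisted wreath case.} Your step rests on a bound $|P|<|T|^{m/2}$ that you do not prove. It is acceptable only as a citation; the paper simply cites \cite{chen}. Also, this is Proposition~\ref{prop3.11}, not Proposition~\ref{prop3.1}.
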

	
	\section{Preliminaries}
	We start with some facts about 2-designs used in this article. Recall that $v,b,r,k,\lambda$ are called the parameters of designs.
	\begin{lemma}\label{l1} {\rm(\cite{Dav1}, \cite{Osifodunrin})}
		Let $\mathcal{D}=(\mathcal{P},\mathcal{B})$ be a symmetric $2$-$(v, k, \lambda)$ design, $\alpha\in \mathcal{P}$  and $G$ be a flag-transitive automorphism group of $\mathcal{D}$, then the following hold:
		\begin{enumerate}
			\item[\rm (1)] $\lambda (v-1)=k(k-1)$. In particular, $\lambda v<k^2$;
			\item[\rm (2)] $G$ is point-transitive, and $G_{\alpha}$ acts transitively on the set of blocks through $\alpha$. Furthermore, $k\mid |G_{\alpha}|$;
			\item[\rm (3)] If $d$ is any non-trivial subdegree of $G$, then $k \mid \lambda d$. 
		\end{enumerate}
	\end{lemma}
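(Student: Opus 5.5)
The statement collects three standard facts, and I would prove each by a direct counting argument combined with the orbit–stabilizer theorem. Only the definition of a symmetric design (so $r=k$) and of flag-transitivity are needed.

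For (1), fix a point $\alpha$ and count in two ways the pairs $(\beta,B)$ with $\beta\ne\alpha$, $B\in\mathcal{B}$ and $\alpha,\beta\in B$.
\begin{itemize}
\item Choosing $\beta$ first: there are $v-1$ choices, each lying in exactly $\lambda$ blocks with $\alpha$, giving $\lambda(v-1)$.
\item Choosing $B$ first: there are $r=k$ blocks through $\alpha$, each containing $k-1$ further points, giving $k(k-1)$.
\end{itemize}
Hence $\lambda(v-1)=k(k-1)$. For the inequality, non-triviality gives $k<v$, i.e.\ $k-1<v-1$. Therefore $\lambda=k(k-1)/(v-1)<k$, and so $\lambda v=k^2-k+\lambda<k^2$.

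For (2), note that every point lies in $k>0$ blocks, so every point lies in some flag.
\begin{itemize}
\item Point-transitivity: given points $\alpha,\beta$, choose flags $(\alpha,B)$ and $(\beta,C)$. An element of $G$ mapping the first flag to the second sends $\alpha$ to $\beta$.
\item Transitivity of $G_\alpha$ on blocks through $\alpha$: given two flags $(\alpha,B)$ and $(\alpha,C)$, any $g\in G$ carrying one to the other fixes $\alpha$, so $g\in G_\alpha$ and $B^g=C$.
\item Divisibility: by orbit–stabilizer, $|G_\alpha:G_{\alpha,B}|=k$, so $k\mid |G_\alpha|$.
\end{itemize}

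For (3), let $\Gamma=\beta^{G_\alpha}$ be a non-trivial $G_\alpha$-orbit, with $\beta\ne\alpha$ and $|\Gamma|=d$. Count the pairs $(\gamma,B)$ with $\gamma\in\Gamma$ and $\alpha,\gamma\in B$.
\begin{itemize}
\item Choosing $\gamma$ first: each $\gamma\in\Gamma$ lies in exactly $\lambda$ blocks with $\alpha$, giving $\lambda d$.
\item Choosing $B$ first: $G_\alpha$ preserves $\Gamma$ and, by (2), is transitive on the $k$ blocks through $\alpha$. So $|B\cap\Gamma|=x$ is the same for every block $B$ through $\alpha$, giving $kx$.
\end{itemize}
Thus $\lambda d=kx$, and so $k\mid\lambda d$.

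None of these steps is a real obstacle. The only point requiring care is the constancy of $|B\cap\Gamma|$ in (3), which follows because $G_\alpha$ maps $\Gamma$ to itself while permuting the blocks through $\alpha$ transitively.
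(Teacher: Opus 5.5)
Your proposal is correct and follows the standard arguments behind the results the paper cites; the paper gives no proof of its own and defers to Davies and Lander. Those arguments are double counting for (1), orbit--stabilizer applied to flags for (2), and Davies' count of pairs $(\gamma,B)$ with $\gamma$ in a $G_\alpha$-orbit $\Gamma$ for (3), using that $|B\cap\Gamma|$ is constant over the blocks through $\alpha$. You were also right to invoke non-triviality ($k<v$, hence $\lambda<k$) for the strict inequality $\lambda v<k^2$, since the lemma's statement leaves this hypothesis implicit.
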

\begin{lemma}\label{ll2.2}{\rm(\cite[Proposition 1.1]{ca})}
	Let $\mathcal{D}=(\mathcal{P},\mathcal{B})$ be a symmetric $2$-$(v, k,\lambda)$ design, admitting a flag-transitive  automorphism group $G$. If $K$ is a permutation group with $G\le K\le Sym(\mathcal{P})$, then $(\mathcal{P},\mathcal{B}^K)$ is also a $2$-$(v,k,\lambda)$ design, admitting a flag-transitive automorphism group $K$.
\end{lemma}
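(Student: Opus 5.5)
The plan is a counting argument: express ``number of blocks through a pair'' as a sum over group elements, and exploit the coset decomposition $K=\bigcup_t Gt$. Flag-transitivity of $G$ on $\mathcal{D}$ is used twice. It gives $\mathcal{B}=B^G$ for a fixed block $B$, and it gives transitivity of $G_B$ on $B$. Throughout, $\mathcal{B}^K$ is read as the $K$-orbit $B^K$; it contains $\mathcal{B}$ and consists of $k$-subsets of $\mathcal{P}$.

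\textbf{Step 1 (balance of pairs).} Let $\chi_C$ denote the characteristic column vector of a subset $C\subseteq\mathcal{P}$. Consider
\[
M_H=\sum_{h\in H}\chi_{B^h}\chi_{B^h}^{T}\qquad (H\in\{G,K\}).
\]
Every block of $\mathcal{D}$ occurs exactly $|G_B|$ times in the list $(B^g)_{g\in G}$. Hence $M_G=|G_B|\,NN^T$, where $N$ is the incidence matrix of $\mathcal{D}$, and therefore
\[
M_G=|G_B|\bigl((k-\lambda)I+\lambda J\bigr).
\]
Now write $K=\bigcup_{t\in T}Gt$ as a union of right cosets. With $P_t$ the permutation matrix of $t$, we have $\chi_{B^{gt}}=P_t\chi_{B^g}$. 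Thus
\[
M_K=\sum_{t\in T}P_tM_GP_t^{T}=|K:G|\,|G_B|\bigl((k-\lambda)I+\lambda J\bigr),
\]
because conjugation by a permutation matrix fixes both $I$ and $J$.

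The off-diagonal $(x,y)$-entry of $M_K$ counts the $h\in K$ with $\{x,y\}\subseteq B^h$. Each block $C\in B^K$ arises from exactly $|K_B|$ elements $h$. So the number of blocks of $B^K$ containing $\{x,y\}$ equals
\[
\lambda'=\lambda\,|K:G|\,|G_B|/|K_B|,
\]
which is independent of the pair. Hence $(\mathcal{P},B^K)$ is a $2$-$(v,k,\lambda')$ design.

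\textbf{Step 2 (the value of $\lambda'$).} Standard counting gives $\lambda'(v-1)=r'(k-1)$ and $b'k=vr'$, where $b'=|B^K|$. Since $\lambda(v-1)=k(k-1)$ by Lemma~\ref{l1}(1), this yields $\lambda'=\lambda b'/v$. So $\lambda'=\lambda$ holds exactly when $B^K=\mathcal{B}$, and the parameter written as ``$\lambda$'' in the lemma should be read as this induced index. I flag this as the one delicate point: the design need not be symmetric for the same $\lambda$ unless $K$ preserves $\mathcal{B}$.

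\textbf{Step 3 (flag-transitivity).} $K$ is transitive on $B^K$ by construction. The stabilizer $K_B$ contains $G_B$, which is transitive on $B$ because $G$ is flag-transitive on $\mathcal{D}$. Hence $K$ is transitive on the flags of $(\mathcal{P},B^K)$.

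The only real obstacle is Step 1. A naive attempt to show that each $G$-orbit $(B^g)^G$ is a $2$-design fails, since $g\in K$ need not normalize $G$. The averaging over all of $K$, using the invariance of $I$ and $J$ under permutation matrices, circumvents this.
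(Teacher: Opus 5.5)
Your proof is correct and is essentially the coset-counting argument of Cameron--Praeger, which the paper cites without reproducing: you write $K=\bigcup_t Gt$, note that each $\mathcal{B}^t$ is an isomorphic copy of $\mathcal{D}$, and conclude that every pair lies in $\lambda|K:G||G_B|/|K_B|$ blocks of $B^K$, packaging the count through $NN^T$ and permutation matrices. You are also right to flag the parameter: with the same $\lambda$ the printed statement is false (the Fano plane with $K=S_7$ gives a $2$-$(7,3,5)$ design), and the correct conclusion is a $2$-$(v,k,\lambda')$ design with $\lambda'=\lambda|B^K|/v$, which equals $\lambda$, and the design is symmetric, exactly when $K$ stabilizes $\mathcal{B}$.
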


\indent Next, we list some conclusions about the primitive group of product type. Let $H$ be a primitive permutation group on $\Gamma$ of size $w$, where $H$ is of almost simple type or simple diagonal type. Assume that $G$ has a product action on $\mathcal{P}=\Gamma^{\ell}$ $(\ell\ge 2)$, then 
$T_1\times T_2\times \cdots \times T_{\ell}\unlhd G\le H\wr S_{\ell},$ where all $T_i$ are isomorphic to each other and $Soc(H)=T_i$ for $i\in \left\{1,\dots, \ell\right\}$.
\begin{lemma}\label{lemma3.2}{\rm(\cite[Lemma 2.3]{lu})}
	Assume that $\mathcal{D}$ is a symmetric $2$-$(v,k,\lambda)$ design and $G\le Aut(\mathcal{D})$ is flag-transitive. Moreover, if $G$ is of product type, then
	\begin{enumerate}
		\item[\rm (1)] $ k \mid \lambda \ell(w-1)$. In particular, $k^*\mid \ell(w-1),$ where $k^*=\frac{k}{\gcd(k,\lambda)}$.
		\item[\rm (2)] If $\gcd(k,\lambda)=p^e$ where $e$ is a positive integer, then
		\begin{equation*}
			\ell^2 p^e=(w^{\ell -2}+2w^{\ell -3}+\cdots +(\ell-2)w+\ell-1)\lambda^*x^2+\dfrac{\ell x+\ell \lambda^*x^2}{w-1},
		\end{equation*}
		where $x$ is a positive integer and $\lambda^*=\frac{\lambda}{\gcd(k,\lambda)}$.
	\end{enumerate}
\end{lemma}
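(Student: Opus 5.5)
We prove (1) with a subdegree argument, using Lemma \ref{l1}(3), and (2) by substituting into the basic parameter identity of Lemma \ref{l1}(1). Throughout, $\mathcal{P}=\Gamma^{\ell}$, $v=w^{\ell}$, and $G\le H\wr S_{\ell}$ acts in product action.

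For (1), fix a point $\alpha=(\gamma_1,\dots,\gamma_{\ell})\in\mathcal{P}$. Every element of $H\wr S_{\ell}$ permutes coordinates and acts coordinatewise by elements of $H$. Hence it preserves the Hamming distance on $\Gamma^{\ell}$, the number of coordinates in which two points differ. It follows that the set
\[
\Delta_1(\alpha)=\{\beta\in\mathcal{P}: \beta \text{ differs from } \alpha \text{ in exactly one coordinate}\}
\]
is $G_\alpha$-invariant, so it is a union of non-trivial $G_\alpha$-orbits with lengths $d_1,\dots,d_t$. Clearly $|\Delta_1(\alpha)|=\ell(w-1)$.

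By Lemma \ref{l1}(3), $k\mid \lambda d_i$ for each $i$. Summing over $i$ gives $k\mid \lambda\ell(w-1)$. Now write $k=\gcd(k,\lambda)k^*$ and $\lambda=\gcd(k,\lambda)\lambda^*$, so that $\gcd(k^*,\lambda^*)=1$. Then $k^*\mid \lambda^*\ell(w-1)$, and therefore $k^*\mid \ell(w-1)$.

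For (2), put $\gcd(k,\lambda)=p^e$, so $k=p^ek^*$ and $\lambda=p^e\lambda^*$. By (1) we may write $\ell(w-1)=xk^*$ for a positive integer $x$. Dividing the identity $\lambda(v-1)=k(k-1)$ of Lemma \ref{l1}(1) by $p^e$ gives
\[
\lambda^*(w^{\ell}-1)=k^*(p^ek^*-1).
\]
Substitute $k^*=\ell(w-1)/x$, multiply by $x^2$, and divide by $w-1$. Using $w^{\ell}-1=(w-1)(w^{\ell-1}+\cdots+w+1)$, this yields
\[
\lambda^*x^2(w^{\ell-1}+\cdots+w+1)=p^e\ell^2(w-1)-\ell x,
\]
that is,
\[
\ell^2p^e=\frac{\lambda^*x^2(w^{\ell-1}+\cdots+1)+\ell x}{w-1}.
\]

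The remaining step is the polynomial identity
\[
w^{\ell-1}+\cdots+w+1=(w-1)\bigl(w^{\ell-2}+2w^{\ell-3}+\cdots+(\ell-2)w+(\ell-1)\bigr)+\ell.
\]
To verify it, write the left side minus $\ell$ as $\sum_{i=1}^{\ell-1}(w^i-1)$ and factor each term as $(w-1)(w^{i-1}+\cdots+1)$. After pulling out $w-1$, the coefficient of $w^j$ in the remaining sum is the number of $i$ with $j<i\le \ell-1$, namely $\ell-1-j$. Substituting this identity into the last display and splitting the fraction gives exactly the stated formula.

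The only conceptual point is the $G_\alpha$-invariance of $\Delta_1(\alpha)$, which rests on $G$ preserving Hamming distance. The rest is routine algebra, with care needed to keep $x$ a positive integer, which is guaranteed by (1).
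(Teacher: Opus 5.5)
Your proof is correct and takes essentially the same route as the cited argument. For (1), the key fact is that the set of $\ell(w-1)$ points differing from $\alpha$ in exactly one coordinate is $G_\alpha$-invariant; the paper's remark after the lemma phrases this as a double count of the pairs $(\beta,B)$ with $\alpha\in B$ and $\beta\in B\cap\mathcal{G}_\alpha$, which is equivalent to your orbit-by-orbit use of Lemma~\ref{l1}(3). For (2), your substitution of $k^*=\ell(w-1)/x$ into $\lambda^*(w^{\ell}-1)=k^*(p^ek^*-1)$, together with the identity $w^{\ell-1}+\cdots+1=(w-1)\bigl(w^{\ell-2}+2w^{\ell-3}+\cdots+(\ell-1)\bigr)+\ell$, checks out, and your choice of $x$ agrees with how the paper later uses the lemma ($\ell(w-1)=k^*x$).
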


\indent Now assume that $\ell=2$. Let $\alpha=(\alpha_{1},\alpha_{2})\in B$ and $\mathcal{G}_{\alpha}=(\alpha^{T_1}\cup \alpha^{T_2})\setminus\{\alpha\}$. 
Counting $\Sigma_1=\{(\beta,B)|\beta\in B\cap \mathcal{G}_{\alpha}, \alpha\in B\}$ in two ways, we get $\lambda|\mathcal{G}_{\alpha}|=k|B\cap \mathcal{G}_{\alpha}|$. Putting $x_1=|B\cap \mathcal{G}_{\alpha}|$, then
 $$x_1=|B\cap \mathcal{G}_{\alpha}|=\dfrac{2\lambda(w-1)}{k},$$
 which corresponds to Lemma \ref{lemma3.2}(1) when $\ell=2$.\\
\indent Let $M=N_G\left(T_1\right) \cap N_G\left(T_2\right).$ Obviously, $G$ acts transitively by conjugation on the set $\{T_1, T_2\}$. Therefore, $|G:M|=2$ and $|G_{\alpha B}:M_{\alpha B}|\le 2$. Since $M\unlhd G$ and
\begin{equation*}
	vk=|G:G_{\alpha B}|=\dfrac{|G:M||M:M_B||M_B:M_{\alpha B}|}{|G_{\alpha B}:M_{\alpha B}|},
\end{equation*}
then $(|B^M|,|\alpha^{M_B}|)=(v,k), (v,\frac{k}{2})$ or $(\frac{v}{2},k)$.\\
\indent If $|\alpha^{M_B}|=\frac{k}{2}$, then $|G_B:M_B|=|G:M|=2$. There exists $g\in G_B\setminus M_{B}$ such that $T_1^g=T_2$ and $T_2^g=T_1$. Let
$$
\begin{array}{lll}
	B_{1j}&=&\{\alpha_{2}|(\alpha_{1},\alpha_{2})\in \alpha^{g^{j}M_{B}}\}, \\
B_{2j}&=&\{\alpha_{1}|(\alpha_{1},\alpha_{2})\in \alpha^{g^{j}M_{B}}\},
\end{array}
 $$ 
 for $j\in \{0,1\}$.
\begin{lemma}\label{lemma2.4}{\rm(\cite[Lemmas 4.6, 4.8]{100})}
	Suppose that $\mathcal{D}$ is a symmetric $2$-$(v,k,\lambda)$ design, and $G\le H\wr S_2$ is a flag-transitive point-primitive product action automorphism group. The following hold:
	\begin{enumerate}
		\item[\rm (1)] If $x_1$ is odd, then $(|B^M|,|\alpha^{M_B}|)\neq (v,k)$.
		\item[\rm (2)] If $|\alpha^{M_B}|=\frac{k}{2}$ and 
		\begin{enumerate}
		\item[\rm (i)] $B_{20}\cap B_{21}=\emptyset$, then $|\alpha^{M_B}|=|B\cap \alpha^{T_2}||B_{20}|$;
		\item[\rm (ii)] $B_{20}\cap B_{21}\neq \emptyset$, then $x_1$ is even.
		\end{enumerate}
\end{enumerate}	
\end{lemma}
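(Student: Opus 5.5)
The statement to prove is Lemma \ref{lemma2.4}. For a point $\gamma=(\gamma_1,\gamma_2)$ and a block $B\ni\gamma$ put
$$a_i(\gamma,B)=|B\cap(\gamma^{T_i}\setminus\{\gamma\})|,\qquad i=1,2 .$$
Because $T_1$ acts on the first coordinate and $T_2$ on the second (each transitively on $\Gamma$), we have $\gamma^{T_1}=\Gamma\times\{\gamma_2\}$ and $\gamma^{T_2}=\{\gamma_1\}\times\Gamma$. These two sets meet only in $\gamma$, so $x_1=a_1(\alpha,B)+a_2(\alpha,B)$.

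Two invariance facts drive everything.
\begin{itemize}
\item Since $M$ normalises $T_1$ and $T_2$, every $m\in M$ satisfies $a_i(\gamma^m,B^m)=a_i(\gamma,B)$.
\item An element $h\in G\setminus M$ interchanges $T_1$ and $T_2$, so $a_i(\gamma^h,B^h)=a_{3-i}(\gamma,B)$.
\end{itemize}

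For part (1) I argue by contradiction. Assume $(|B^M|,|\alpha^{M_B}|)=(v,k)$. Then $M$ is transitive on blocks and $M_B$ is transitive on the points of $B$, so $M$ is flag-transitive. Hence $M_\alpha$ is transitive on the $k$ blocks through $\alpha$. By the first invariance fact, $a_i(\alpha,B)$ is the same for every block $B\ni\alpha$; call it $y_i$.

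Now count the pairs $(\beta,B)$ with $\beta\in\alpha^{T_i}\setminus\{\alpha\}$ and $\alpha,\beta\in B$. Each of the $w-1$ choices of $\beta$ lies in $\lambda$ blocks with $\alpha$, so $\lambda(w-1)=k\,y_i$ for $i=1,2$. Thus $y_1=y_2=\lambda(w-1)/k$ and $x_1=2y_1$ is even, contradicting the hypothesis that $x_1$ is odd.

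For part (2) assume $|\alpha^{M_B}|=k/2$. Then $B=\alpha^{M_B}\,\dot\cup\,\alpha^{gM_B}$, the union of the $M_B$-orbits of $\alpha$ and $\alpha^g$. Here $B_{20}$ and $B_{21}$ are the sets of first coordinates occurring in the two orbits.

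For each $\beta_1\in\Gamma$ consider the fibre $F(\beta_1)=B\cap(\{\beta_1\}\times\Gamma)$. If $\gamma\in B$ has first coordinate $\beta_1$, then $F(\beta_1)=(B\cap\gamma^{T_2})$, so $|F(\beta_1)|=a_2(\gamma,B)+1$.

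For (i) I use the projection of $\alpha^{M_B}$ onto the first coordinate, whose image is $B_{20}$. Since $M\le N_G(T_1)\cap N_G(T_2)$ acts coordinatewise, $M_B$ permutes the fibres of this projection transitively. Therefore all fibres $\alpha^{M_B}\cap(\{\beta_1\}\times\Gamma)$, $\beta_1\in B_{20}$, have the same size, and $|\alpha^{M_B}|=|B_{20}|\cdot|\alpha^{M_B}\cap(\{\alpha_1\}\times\Gamma)|$. When $B_{20}\cap B_{21}=\emptyset$, no point of the other orbit has first coordinate $\alpha_1$. So that fibre is all of $F(\alpha_1)=B\cap\alpha^{T_2}$, which gives the formula in (i).

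For (ii), pick $\beta_1\in B_{20}\cap B_{21}$, and choose $\gamma\in\alpha^{M_B}$ and $\delta\in\alpha^{gM_B}$, both with first coordinate $\beta_1$.
\begin{itemize}
\item Every point of $\alpha^{M_B}$ is an $M_B$-image of $\alpha$, so by the first invariance fact $(a_1,a_2)(\gamma,B)=(a_1,a_2)(\alpha,B)$.
\item Every point of $\alpha^{gM_B}$ has the form $\alpha^{gm}$ with $gm\in G_B\setminus M$, so by the second invariance fact $(a_1,a_2)(\delta,B)=(a_2,a_1)(\alpha,B)$.
\end{itemize}
Since $\gamma$ and $\delta$ lie in the same fibre $F(\beta_1)$, we get $a_2(\gamma,B)+1=|F(\beta_1)|=a_2(\delta,B)+1$. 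This says $a_2(\alpha,B)=a_1(\alpha,B)$, so $x_1=2a_1(\alpha,B)$ is even.

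The step I expect to need the most care is justifying these coordinate claims. One must check that $M$ really acts coordinatewise, i.e. inside $H\times H$, rather than merely normalising each $T_i$ abstractly. One must also check that $g$ swaps the two coordinate directions, so that $\gamma^{T_1}$ and $\gamma^{T_2}$ are mapped to each other as stated. Both follow from $G\le H\wr S_2$ in product action together with $\mathrm{Soc}(H)=T_i$. Once these are in place, the arguments above are pure counting.
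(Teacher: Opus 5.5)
Your proof is correct, and each of the three parts goes through as written. The paper does not prove this lemma itself; it imports it from \cite{100} (Lemmas 4.6 and 4.8 there). So there is no in-paper argument to compare against, and your proposal works as a self-contained verification.

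The two points you flag as needing care are immediate:
\begin{itemize}
\item An element $(h_1,h_2)\sigma$ of $H\wr S_2$ with $\sigma=(12)$ maps $T_1=\mathrm{Soc}(H)\times 1$ onto $T_2=1\times\mathrm{Soc}(H)$. Hence $M=N_G(T_1)\cap N_G(T_2)=G\cap(H\times H)$ acts coordinatewise, and every element of $G_B\setminus M_B\subseteq G\setminus M$ interchanges the two coordinate directions.
\item Each $T_i$ is transitive on $\Gamma$, because $\mathrm{Soc}(H)$ is transitive on $\Gamma$ for the primitive group $H$. This holds in both the almost simple and the simple diagonal case, so $\gamma^{T_1}=\Gamma\times\{\gamma_2\}$ and $\gamma^{T_2}=\{\gamma_1\}\times\Gamma$ as you use.
\end{itemize}

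Part (1) is just Lemma~\ref{l1}(3) applied to the group $M$, which is flag-transitive under the assumption $(|B^M|,|\alpha^{M_B}|)=(v,k)$. The set $\alpha^{T_1}\setminus\{\alpha\}$ is a union of $M_\alpha$-orbits, so $k\mid\lambda(w-1)$, and therefore $x_1=2\lambda(w-1)/k$ is even. Your double count is exactly the proof of that divisibility.

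In part (2), the fibre argument for (i) is complete: $M_B$ acts transitively and coordinatewise on $\alpha^{M_B}$, so all fibres over $B_{20}$ have equal size. The argument for (ii) is also complete: two points of $B$ in a common fibre $\{\beta_1\}\times\Gamma$ see the same set $B\cap(\{\beta_1\}\times\Gamma)$, which forces $a_1(\alpha,B)=a_2(\alpha,B)$.
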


\indent Note that $|\alpha^{M_B}|=\frac{k}{2}$ or $k$, the following lemma give the composition of $x_1=|B\cap \mathcal{G}_{\alpha}|$.
\begin{lemma}\label{l2} {\rm (\cite[Lemma 2.9]{on})} 
 		Assume that $\mathcal{D}=(\mathcal{P},\mathcal{B})$ and $G$ satisfy the conditions of Lemma {\rm\ref{lemma2.4}}.
 	For $\alpha=(\alpha_1,\alpha_2) \in B$, let $\left|\alpha^{T_1} \cap \alpha^{M_B}\right|=c_1,\left|\alpha^{T_2} \cap \alpha^{M_B}\right|=c_2.$ Then one of the following holds:
	\begin{enumerate}
		\item[\rm (1)] $\left|\alpha^{M_B}\right|=\frac{k}{2}$, and $x_1=c_1+c_2-2$ or $2\left(c_1+c_2\right)-2$, where $c_i \mid \frac{k}{2}$, for $i=1,2$;
		\item[\rm(2)]$\left|\alpha^{M_B}\right|=k$, and $x_1=c_1+c_2-2$, where $c_i \mid k$, for $i=1,2$.
	\end{enumerate}
\end{lemma}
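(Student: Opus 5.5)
The plan is to compute $x_1$ by splitting $\mathcal{G}_\alpha$ along its two pieces and reading off their intersections with $B$ from $T_i$-orbit block systems. Since $\alpha^{T_1}=\{(\beta_1,\alpha_2)\}$ and $\alpha^{T_2}=\{(\alpha_1,\beta_2)\}$ meet only in $\alpha$, we have
$$x_1=|B\cap\alpha^{T_1}|+|B\cap\alpha^{T_2}|-2 .$$
So everything reduces to computing $a_i:=|B\cap\alpha^{T_i}|$ for $i=1,2$.

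\textbf{Step 1: a block-system argument.} Each $T_i$ is normal in $M$, so the $T_i$-orbits form an $M$-invariant partition of $\mathcal{P}$. Hence $M_B$ permutes these orbits. Now take any $M_B$-orbit $O\subseteq B$. Because $M_B$ is transitive on $O$, the nonempty sets $O\cap\Delta$, with $\Delta$ running over the $T_i$-orbits, all have the same size and partition $O$. Therefore this common size divides $|O|$. For $O=\alpha^{M_B}$ the common size is $c_i$, which gives $c_i\mid |\alpha^{M_B}|$.

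\textbf{Step 2: the case $|\alpha^{M_B}|=k$.} Here $B=\alpha^{M_B}$, so $a_i=c_i$ with $c_i\mid k$. This gives $x_1=c_1+c_2-2$, which is (2).

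\textbf{Step 3: the case $|\alpha^{M_B}|=k/2$.} Take $g\in G_B\setminus M_B$ swapping $T_1$ and $T_2$, as in the setup before Lemma~\ref{lemma2.4}.
\begin{itemize}
\item Since $M\unlhd G$ and $g\in G_B$, the element $g$ normalizes $M_B$. So $B=O\cup O'$ with $O=\alpha^{M_B}$ and $O'=\alpha^{gM_B}=O^g$, and $|O|=|O'|=k/2$.
\item Because $g$ maps $T_1$-orbits to $T_2$-orbits and vice versa, Step 1 applied to $O'$ shows the following. $O'$ meets each $T_1$-orbit it meets in exactly $c_2$ points, and each $T_2$-orbit it meets in exactly $c_1$ points.
\item Consequently $a_1=c_1+\varepsilon_1c_2$ and $a_2=c_2+\varepsilon_2c_1$, where $\varepsilon_i\in\{0,1\}$ records whether $O'$ meets $\alpha^{T_i}$.
\end{itemize}

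\textbf{Step 4: showing $\varepsilon_1=\varepsilon_2$.} This is the main obstacle, since a priori the mixed values $2c_1+c_2-2$ and $c_1+2c_2-2$ are not excluded. For any $\beta\in B$, the number $|B\cap\beta^{T_1}|$ is constant as $\beta$ runs over an $M_B$-orbit, because $M_B$ fixes $B$ and preserves the $T_1$-partition.
\begin{itemize}
\item Applying $g$ shows that for $\alpha'=\alpha^g\in O'$ we have $|B\cap\alpha'^{T_1}|=a_2$ and $|B\cap\alpha'^{T_2}|=a_1$. Thus on $O'$ the value of $|B\cap\beta^{T_1}|$ is $a_2$.
\item Suppose $\varepsilon_1=1$, and pick $\beta\in O'\cap\alpha^{T_1}$. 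Then $\beta^{T_1}=\alpha^{T_1}$, so $a_1=|B\cap\beta^{T_1}|=a_2$.
\item This gives $c_1+c_2=c_2+\varepsilon_2c_1$. Since $c_1\ge1$, it forces $\varepsilon_2=1$.
\item The symmetric argument, using $T_2$, shows that $\varepsilon_2=1$ forces $\varepsilon_1=1$.
\end{itemize}
Hence either $\varepsilon_1=\varepsilon_2=0$, giving $x_1=c_1+c_2-2$, or $\varepsilon_1=\varepsilon_2=1$, giving $x_1=2(c_1+c_2)-2$. In both cases $c_i\mid k/2$ by Step 1, which is (1).
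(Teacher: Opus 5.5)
Your proof is correct and complete. The paper gives no proof of this lemma; it imports the statement from \cite{on}. So there is no in-paper argument to compare with, and the useful comparison is with how the paper later handles the same objects in coordinates.

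Since $\beta^{T_1}=\Gamma\times\{\beta_2\}$ and $\beta^{T_2}=\{\beta_1\}\times\Gamma$, the $T_1$-orbits meeting $\alpha^{M_B}$ are indexed by $B_{10}$, and the $T_2$-orbits meeting it are indexed by $B_{20}$. Your Step~1 is therefore the coordinate-free form of the identity $\frac{k}{2}=c_1|B_{10}|=c_2|B_{20}|$ used in the proof of Proposition~\ref{prop3.4}.

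Likewise, by $M_B$-transitivity on $O$, your condition $\varepsilon_2=1$ is equivalent to $B_{20}\cap B_{21}\neq\emptyset$. Your Step~4 excludes the mixed values $2c_1+c_2-2$ and $c_1+2c_2-2$ using the $M_B$-invariance of $\beta\mapsto|B\cap\beta^{T_i}|$ together with the swap by $g$. In doing so it also recovers both parts of Lemma~\ref{lemma2.4}(2). If $\varepsilon_2=0$, then $|\alpha^{M_B}|=c_2|B_{20}|=|B\cap\alpha^{T_2}||B_{20}|$. If $\varepsilon_2=1$, then $\varepsilon_1=1$, so $x_1=2(c_1+c_2)-2$ is even.

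You use only two facts from outside your argument: the dichotomy $|\alpha^{M_B}|\in\{k/2,k\}$, and the existence of $g\in G_B\setminus M_B$ interchanging $T_1$ and $T_2$. Both are established in the discussion preceding Lemma~\ref{lemma2.4}, so nothing is missing.
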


\indent Recall that $G\le H\wr S_2$, where $H$ is of almost simple or simple diagonal type. In particular, if $H$ is of almost simple type, we have the following result. 
 \begin{lemma}\label{lemma3.3}{\rm(\cite[Lemmas 3.3, 3.7]{2020})}
	Assume that $\mathcal{D}=(\mathcal{P},\mathcal{B})$ and $G$ satisfy the conditions of Lemma {\rm\ref{lemma2.4}}. Let $T_1\times T_2\unlhd G\le H\wr S_2$, where $H$ is an almost simple type primitive group on $\Gamma$ of size $w$ with simple socle $T_i\cong T$ $(i=1,2)$ and $T$ is a non-abelian simple group. Let $\alpha\in \mathcal{P}$ and $B\in \mathcal{B}$ such that $\alpha\in B$, then
	\begin{enumerate}
		\item[\rm (1)] $k\mid 2|\alpha^{\left(T_{1}\right)_{B}}||\alpha^{\left(T_{2}\right)_{B}}||Out(T)|^2w^4$;
		\item[\rm (2)] $k=\dfrac{(|\alpha^{T_{1}}\cap B|+|\alpha^{T_{2}}\cap B|-2)(w+1)}{2}+1$.
	\end{enumerate}
\end{lemma}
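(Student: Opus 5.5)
The statement has two parts; I will prove (2) first, by a counting argument, and then (1), by an index computation inside $N=T_1\times T_2$. Throughout, $\mathcal{P}=\Gamma^2$ and $v=w^2$. Each $T_i$ acts transitively on $\Gamma$, since it is a nontrivial normal subgroup of the primitive group $H$. Hence for $\alpha=(\alpha_1,\alpha_2)$ the orbits are
$$\alpha^{T_1}=\{(\beta,\alpha_2)\mid \beta\in\Gamma\},\qquad \alpha^{T_2}=\{(\alpha_1,\gamma)\mid \gamma\in\Gamma\}.$$
Both have size $w$, and they meet exactly in $\{\alpha\}$.

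For (2), I compute $x_1=|B\cap \mathcal{G}_{\alpha}|$ in two ways. Since $\alpha\in B$ and the two orbits meet only in $\alpha$,
$$x_1=|\alpha^{T_1}\cap B|+|\alpha^{T_2}\cap B|-2 .$$
On the other hand, the double count before Lemma \ref{lemma2.4} gives $x_1=\frac{2\lambda(w-1)}{k}$, so $\lambda=\frac{kx_1}{2(w-1)}$. Substituting this into $\lambda(v-1)=k(k-1)$ from Lemma \ref{l1}(1), with $v-1=(w-1)(w+1)$, yields $\frac{kx_1(w+1)}{2}=k(k-1)$. Cancelling $k$ gives $k=\frac{x_1(w+1)}{2}+1$, which is (2).

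For (1), the plan is to show that $kv$ divides $|G:(T_1)_{\alpha B}\times(T_2)_{\alpha B}|$ and then to bound the factors of this index one at a time. Flag-transitivity gives $vk=|G:G_{\alpha B}|$. Since $(T_1)_{\alpha B}\times (T_2)_{\alpha B}\le G_{\alpha B}$, we have
$$vk \;\Big|\; |G:N|\,|T_1:(T_1)_{\alpha B}|\,|T_2:(T_2)_{\alpha B}|.$$
The factor $|G:N|$ divides $|H\wr S_2:T^2|=2|H:T|^2$, which divides $2|Out(T)|^2$. For each $i$ we also have
$$|T_i:(T_i)_{\alpha B}|=|T_i:(T_i)_B|\,|\alpha^{(T_i)_B}| .$$
So everything reduces to showing that $|T_i:(T_i)_B|$ divides $w^2$.

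This last divisibility is the only step that needs care, and I would handle it through $M=N_G(T_1)\cap N_G(T_2)$. Since $T_i\unlhd M$, the set $T_iM_B$ is a subgroup of $M$ containing $M_B$, and
$$|T_i:(T_i)_B|=|T_iM_B:M_B|.$$
This index divides $|M:M_B|=|M:T_iM_B|\,|T_iM_B:M_B|$. From the discussion before Lemma \ref{lemma2.4}, $|M:M_B|=|B^M|\in\{v,\frac v2\}$, and in either case it divides $v=w^2$.

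Combining the three bounds gives
$$vk \;\Big|\; 2|Out(T)|^2\,w^4\,|\alpha^{(T_1)_B}|\,|\alpha^{(T_2)_B}|.$$
Since $k$ divides $vk$, statement (1) follows, and in fact the stronger divisibility by $vk$ holds.
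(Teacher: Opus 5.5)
Your proof is correct and complete. Part (2) is the double count $2\lambda(w-1)=kx_1$ combined with $\lambda(w^2-1)=k(k-1)$, and part (1) is the index chain showing that $vk=|G:G_{\alpha B}|$ divides $|G:N|\prod_i|T_i:(T_i)_B|\,|\alpha^{(T_i)_B}|$. This is the natural argument for the result; the paper gives no proof of its own and simply quotes it from \cite{2020}. Because you keep the factor $v=w^2$ on the left, your argument actually yields the sharper divisibility $k\mid 2|\alpha^{(T_1)_B}||\alpha^{(T_2)_B}||Out(T)|^2w^2$, which implies the stated one.
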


\section{Proof of Theorem \ref{TH1}}
 \numberwithin{equation}{section}
\indent In this section, we always assume that  $\mathcal{D}=(\mathcal{P}, \mathcal{B})$ is a symmetric $2$-$(v, k, \lambda)$ design admitting a flag-transitive point-primitive automorphism group $G.$ Recall that $\gcd(k,\lambda)=p$. Let $k=k^*p$, $\lambda=\lambda^*p$, where $$k^*=\frac{k}{\gcd(k,\lambda)}~\text{and}~ \lambda^*=\frac{\lambda}{\gcd(k,\lambda)}.$$ 
By \cite[Theorem 1.1]{zhao}, we know that Theorem \ref{TH1} holds for $p=2$. So we suppose that $p$ is an odd prime in the following. Moreover, if $\lambda^*=1$, then $\lambda=p$, which is a prime number. By the result of \cite{2020}, Theorem \ref{TH1} holds, so we further assume that $\lambda^*\ge 2$. It yields that $k^*\ge 3.$\\
\indent Based on O'Nan-Scott Theorem \cite{li}, we know that the primitive permutation groups can be classified into five types: (i) Simple diagonal type;  (ii) Twisted wreath product type; (iii) Product type; (iv) Affine type;  (v) Almost simple type. We first show that $G$ cannot be of twisted wreath and simple diagonal type.
\begin{proposition}\label{prop3.11}{\rm({\cite[Proposition 3.3]{chen}})}
	If $G$ is a flag-transitive point-primitive automorphism group of a $2$-design, then $G$ cannot be primitive of twisted wreath product type.
\end{proposition}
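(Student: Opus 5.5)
The plan is to combine the standard arithmetic of flag-transitive $2$-designs with the structure of twisted wreath product groups. Write $r$ for the replication number of a general (not necessarily symmetric) $2$-design, since the statement is for arbitrary $2$-designs. By Fisher's inequality $b\ge v$, so $r\ge k$. Then $r(k-1)=\lambda(v-1)$ gives two facts: $r^2>\lambda(v-1)$, and $r>\lambda$.

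The general analogue of Lemma \ref{l1}(2),(3) also holds: $r\mid |G_\alpha|$, and $r\mid\lambda d$ for every non-trivial subdegree $d$. Put $r_0=r/\gcd(r,\lambda)$. Then $r_0\mid d$ for every non-trivial subdegree, and hence $r_0$ divides the size of every non-trivial $G_\alpha$-invariant set of points. Since $\lambda\ge\gcd(r,\lambda)=r/r_0$, the inequality $r^2>\lambda(v-1)$ gives $r\,r_0>v-1$, and therefore
\[
|G_\alpha|\cdot r_0>v-1. \qquad (\ast)
\]

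Next I use the structure of twisted wreath type. Here $G$ has a regular normal subgroup $N=T_1\times\cdots\times T_m\cong T^m$, with $T$ non-abelian simple, and $m\ge 6$. Thus $v=|T|^m$, and $G_\alpha$ is a complement to $N$ that acts by conjugation, transitively, on $\{T_1,\dots,T_m\}$. Identifying points with elements of $N$ via the regular action, $G_\alpha$-orbits on points correspond to $G_\alpha$-conjugacy orbits on $N$. The set $\bigcup_i (T_i\setminus\{1\})$ is therefore a non-trivial $G_\alpha$-invariant set of size $m(|T|-1)$, so $r_0\le m(|T|-1)$.

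Substituting into $(\ast)$ gives
\[
|G_\alpha|\, m(|T|-1)>|T|^m-1 .
\]
To finish, I need an upper bound for $|G_\alpha|$. In twisted wreath type, $G_\alpha\cong P$ is a transitive subgroup of $S_m$ whose point stabiliser $Q$ has $T$ as a composition factor, so $|T|$ divides $|Q|$. The structure theory gives bounds of the form $|P|\le |{\rm Aut}(T)|^{m/c}$ or $|P|<|T|^{m/2}$. Any such bound, combined with $m\ge 6$ and $|T|\ge 60$, contradicts the displayed inequality.

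The main obstacle is this last step: extracting a sufficiently sharp bound on $|P|$ from the twisted wreath structure. If a clean general bound is not available, I would first try a crude estimate on the orbit sizes. The invariant set $\bigcup_i (T_i\setminus\{1\})$ splits into $G_\alpha$-orbits on which $P$ acts through $Q\le{\rm Aut}(T)$. This lets one take a single suborbit $d\le m|{\rm Aut}(T)|$, giving $r_0\le m|{\rm Aut}(T)|$. I would then compare $|P|\le m!$ with $|T|^m$, using $|T|^{m-1}\ge 60^{m-1}\gg m!\cdot m^2|{\rm Out}(T)|$ for $m\ge 6$.
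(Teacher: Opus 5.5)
\textbf{There is a genuine gap in the final step.} Everything up to $(\ast)$ is correct: $rr_0>v-1$, $r\mid |G_\alpha|$, and $r_0\le m(|T|-1)$. You can even sharpen the last bound. Since $r_0\mid v-1=|T|^m-1$, $r_0$ is coprime to $|T|$. The suborbit through $t\in T_1\setminus\{1\}$ has length $m\,|t^{\varphi(Q)}|$, and $|t^{\varphi(Q)}|$ divides $|\mathrm{Aut}(T)|$. Hence $r_0\mid m|\mathrm{Out}(T)|$.

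The problem is not just a missing citation. The bounds $|P|\le|\mathrm{Aut}(T)|^{m/c}$ and $|P|<|T|^{m/2}$ are not theorems, and no bound on $|G_\alpha|$ strong enough to contradict $(\ast)$ holds for twisted wreath groups. Here is a family of examples.
\begin{itemize}
\item Let $T=A_5$ and $P=S_6\wr S_b$ in its imprimitive action on $m=6b$ points. Take the point stabiliser $Q=S_5\times(S_6\wr S_{b-1})$, and let $\varphi\colon Q\to S_5\cong\mathrm{Aut}(A_5)$ be the projection onto the first factor.
\item The only overgroups of $Q$ in $P$ are the block stabiliser $S_6\times(S_6\wr S_{b-1})$ and $P$ itself. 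The map $\varphi$ extends to neither, because any homomorphism $S_6\to S_5$ kills $A_6$.
\item Since $P$ is faithful on the $m$ points, $T\,\mathrm{twr}_\varphi P$ is primitive of twisted wreath type, with $|G_\alpha|=720^b\,b!$ and $v=60^{6b}$.
\end{itemize}
Then $|G_\alpha|>|T|^{m/2}$ once $b$ is around $10^3$, and for $b$ large even $|G_\alpha|>v$. So $(\ast)$ holds with room to spare, even with $r_0\le m|\mathrm{Out}(T)|$. Your fallback fails for the same reason: $m$ is not bounded in terms of $|T|$, and $60^{m-1}>m!\,m^2|\mathrm{Out}(T)|$ is false once $m$ exceeds about $150$.

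\textbf{What is missing} is an extra arithmetic constraint on $r$, or equivalently on $\gcd(r,\lambda)$; the condition $r\mid |G_\alpha|$ alone is too weak. The paper gives no argument of its own here and simply quotes \cite[Proposition 3.3]{chen}.

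In the setting where the result is actually used (a symmetric design with $\gcd(k,\lambda)=p$ prime), your framework does close:
\begin{itemize}
\item We have $p\mid k\mid |P|$, and $P\le S_m$ because $P$ is faithful on the factors (as $C_G(N)=1$). Hence $p\le m$.
\item Here $r_0=k^*$, so $k^*\mid m|\mathrm{Out}(T)|$ by the argument above.
\item Therefore $|T|^m=v<k^2/\lambda=pk^{*2}/\lambda^*\le m^3|\mathrm{Out}(T)|^2$. This is impossible for $m\ge 6$, since $|\mathrm{Out}(T)|^2<|T|$.
\end{itemize}
This is the same kind of argument the paper uses for the simple diagonal case in Proposition~\ref{prop3.1}.
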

\begin{proposition}\label{prop3.1}
	If $\mathcal{D}=(\mathcal{P}, \mathcal{B})$ is a symmetric $2$-$(v, k, \lambda)$ design admitting a flag-transitive point-primitive automorphism group $G$, where  $\gcd(k,\lambda)=p$ is a prime, then $G$ is not of simple diagonal type.
\end{proposition}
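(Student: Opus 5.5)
The plan is to imitate the standard argument for excluding simple diagonal type under $\gcd(k,\lambda)=1$, but with $k^*=k/p$ in place of $k$. The prime $p$ is then handled separately.

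\textbf{Setup.} Suppose $G$ is of simple diagonal type. Then $\mathrm{Soc}(G)=T^m$ with $m\ge 2$ and $T$ non-abelian simple. We have $v=|T|^{m-1}$, and the point stabiliser satisfies $G_\alpha\le \mathrm{Aut}(T)\times S_m$, with $\mathrm{Soc}(G)_\alpha$ the diagonal copy of $T$.

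\textbf{Step 1: $k^*$ divides $m(|T|-1)$.} Each factor $T_i$ acts semiregularly, so its orbit $\alpha^{T_i}$ has size $|T|$. The set $\bigcup_{i=1}^m \alpha^{T_i}\setminus\{\alpha\}$ has size $m(|T|-1)$ and is $G_\alpha$-invariant, because $G_\alpha$ permutes the $T_i$. This set is a union of nontrivial $G_\alpha$-orbits. Applying Lemma \ref{l1}(3) to each orbit and summing gives $k\mid \lambda m(|T|-1)$. Since $\gcd(k^*,\lambda^*)=1$, this yields $k^*\mid m(|T|-1)$.

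\textbf{Step 2: $k^*$ divides every nontrivial subdegree.} More finely, $k^*$ divides each nontrivial subdegree. Take the $G_\alpha$-orbit of a point $(t,1,\dots,1)$ with $t$ in a fixed conjugacy class $C$ of $T$. This orbit has size at most $m|C|\cdot|\mathrm{Out}(T)|$, and in fact it divides such a quantity. Hence $k^*$ divides $\gcd$ of these sizes over suitable classes $C$. Combining this with $\lambda^*(v-1)=k^*(k-1)$ gives $k^*\mid v-1=|T|^{m-1}-1$ as well.

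\textbf{Step 3: the inequality.} Write $k=pk^*$ and $\lambda=p\lambda^*$ with $\lambda^*\ge 2$. From Lemma \ref{l1}(1) and $\lambda<k$, we get
$$\lambda^*(|T|^{m-1}-1)=k^*(pk^*-1) \quad\text{and}\quad \lambda^*<k^*.$$
Using $k^*\le m(|T|-1)$, this gives $2|T|^{m-1}\lesssim p\,m^2|T|^2$, together with $p=k/k^*$.

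\textbf{Main obstacle: bounding $p$.} The inequality in Step 3 alone does not bound $p$. The plan is to exploit that $p\mid k\mid |G_\alpha|$, so $p$ divides $|\mathrm{Aut}(T)|\,m!$. In addition, $p\mid\lambda<k$, and the congruence $\lambda^*(v-1)\equiv -k^* \pmod{pk^{*2}}$ holds. From these I would derive $p\le$ a function of $|T|$ and $m$ small enough that the displayed inequality fails, at least for $m\ge 3$.

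\textbf{Remaining cases.} For $m=2$ we have $v=|T|$ and $k^*\mid 2(|T|-1)$ together with $k^*\mid |T|-1$. Here I would instead use a second subdegree coming from a conjugacy class $C$ with $\gcd(2|C|,|T|-1)$ small, for instance a class of elements of order $2$. This forces $k^*\le 2\gcd(|C|,|T|-1)$, which is too small to satisfy $\lambda^*(v-1)=k^*(k-1)$ with $\lambda^*<k^*$, giving a contradiction.
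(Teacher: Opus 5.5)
There is a genuine gap exactly where you flag the main obstacle: you never bound $p$, and the facts you list do not give a usable bound. The congruence $\lambda^*(v-1)\equiv -k^*\pmod{pk^{*2}}$ is just the basic equation rewritten, and $p\mid |\mathrm{Aut}(T)|\,m!$ gives at best $p<|T|$ or $p\le m$. Worse, with your bound $k^*\le m(|T|-1)$, no bound on $p$ can finish small $m$. For $m=3$ your inequality reads $2|T|^2\lesssim 9p\,|T|^2$, and for $m=2$ it reads $2|T|\lesssim 4p\,|T|^2$; both hold for every prime $p$.

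Two ideas are missing. The first is $p<k^*$. The paper gets this from $N=T^m$. The length $|N_\alpha:N_{\alpha B}|$ of the $N_\alpha$-orbit of $B$ divides $k$, hence divides $k\cdot\frac{k-1}{\lambda^*}=p(v-1)$. It also divides $|N_\alpha|=|T|$, which is coprime to $v-1=|T|^{m-1}-1$. So the index is $1$ or $p$, and $1$ is excluded by Zieschang's result, since it would make $N$ a Frobenius group. Thus $p\mid |T|\mid v$. Reading $\lambda^*(v-1)=k^*(pk^*-1)$ modulo $p$ then gives $p\mid k^*-\lambda^*>0$, so $p<k^*$ and $v<pk^{*2}/\lambda^*<\tfrac12 k^{*3}$. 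Within your own framework you could instead split on $p$: if $p\mid|T|$, the same reduction mod $p$ applies; if $p\nmid|T|$, then $\gcd(k,|T|)=1$ and $k\mid m!\,|\mathrm{Out}(T)|$ outright.

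The second missing idea is a sharp bound on $k^*$. From $k^*\mid v-1$ you get $\gcd(k^*,|T|)=1$. Together with $k\mid |G_\alpha|\mid m!\,|T|\,|\mathrm{Out}(T)|$, this gives $k^*\mid m!\,|\mathrm{Out}(T)|$. Your Step 2 already yields $k^*\mid m\,|\mathrm{Out}(T)|$ for $m\ge 3$, because $\gcd(k^*,|C|)=1$, but you do not draw this conclusion. The paper uses $k^*\le m|T|$ with $v<\tfrac12k^{*3}$ to get $m\le 5$. It then finishes with $|T|^{m-1}<\tfrac12\bigl(m!\,|\mathrm{Out}(T)|\bigr)^3$ and a short check of small simple groups.

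Your separate argument for $m=2$ is also wrong as written. Since $|C|$ divides $|T|$, $\gcd(|C|,|T|-1)=1$ for every class $C$, so your bound would give $k^*\le 2$. That is an instant contradiction, and it is too good to be true. The error is that you ignore the outer automorphisms of $T$ induced by $G_\alpha$, which can fuse several $T$-classes; the swap also sends the point $(t,1)$ to $(t^{-1},1)$. The $G_\alpha$-orbit of $(t,1)$ has length $|C|\,j$, where $j$ divides $|G_\alpha:N_\alpha|$ and hence divides $2|\mathrm{Out}(T)|$. So the correct conclusion is only $k^*\mid 2|\mathrm{Out}(T)|$, which does not contradict $2\le\lambda^*<k^*$ by itself. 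You still need $p<k^*$ to reach $|T|<4|\mathrm{Out}(T)|^3$, which no simple group satisfies. A minor slip: for $m=2$ one has $\alpha^{T_1}=\alpha^{T_2}$, so your Step 1 set has size $|T|-1$, not $2(|T|-1)$; the divisibility is unaffected.
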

	{\bfseries Proof}. Assume to the contrary that $G$ is of simple diagonal type. Let $N \cong T^{m}$ $(m\ge 2)$ be the socle of $G$, where $T$ is a non-abelian simple group. For a point $\alpha \in B$, by the flag-transitivity of $G$ and $N_{\alpha} \lhd G_{\alpha}$, we have $\left|N_{\alpha}: N_{\alpha B}\right| \mid k$. Using Lemma \ref{l1}(1), we obtain that 
\begin{equation*}
	|N_{\alpha}: N_{\alpha B}| \mid k\Big( \frac{k-1}{\lambda^*}\Big)=p(v-1).
\end{equation*}
Noting $v=|T|^{m-1},$ which implies that $\gcd(\left|N_{\alpha}: N_{\alpha B}\right|, v-1)=1$, and so $\left|N_{\alpha}: N_{\alpha B}\right| \mid p.$ Therefore, $\left|N_{\alpha}: N_{\alpha B}\right|=1$ or $p.$\\
\indent If $\left|N_{\alpha}: N_{\alpha B}\right|=1$, by \cite[Proposition 2]{1988}, then $N$ is a Frobenius group, contrary to the fact that $N$ is the direct product of some non-abelian simple groups. So, $\left|N_{\alpha}: N_{\alpha B}\right|=p$. It follows that $p\mid |N|$, and so $p\mid v$. The basic equation gives that $\lambda^*(v-1)=k^*(k^*p-1)$. So $p\mid (k^*-\lambda^*)$, and hence $p<p+\lambda^*\le k^*$. By
Lemma \ref{l1}(1), $\lambda v<k^2$, which means that $v<\frac{k^{*3}}{\lambda^*}\le \frac{1}{2}k^{*3}$ for $\lambda^*\ge 2$. Moreover, $k^*\le m|T|$ by Lemma \ref{l1}(3). Hence, 
\begin{equation}\label{eq3.1}
	|T|^{m-1}=v<\frac{1}{2}k^{*3}\le \frac{1}{2}m^3|T|^3.
\end{equation}
As $T$ is a non-abelian simple group, we have $60^{m-4}\le |T|^{m-4}<\frac{1}{2}m^3$, so $2\le m\le 5.$\\
\indent Since $G$ is of simple diagonal type, then $G_{\alpha}\lesssim Aut(T) \times S_{m}$. This implies, by Lemma \ref{l1}(2), that $k\mid m!|T||Out(T)|$. Note that $k^*\mid (v-1)$, then $\gcd(k^*,|T|)=1$, and hence $k^*\mid m!|Out(T)|$. Thus
\begin{equation}\label{eq3.2}
	|T|^{m-1}=v<\frac{1}{2}k^{*3}\le \frac{1}{2}m!^3|Out(T)|^3.
\end{equation}
If $m=5$, by (\ref{eq3.2}) $|T|\le 30|Out(T)|^{3/4}$, and we deduce that $|T|<400|Out(T)|^{2}$. These simple groups are determined in \cite[Lemma 2.2]{lu}. That is, $T\cong L_{3}(4)$ or $L_{2}(q)$ for $q\in \{5,7,$ $8,9,11,13,16,27\}$. 
But there is no group that satisfying the inequality $|T|<30|Out(T)|^{3/4}$. Again using (\ref{eq3.2}), we have
$|T|\le 19|Out(T)|$ when $m=4$, and $|T|\le 10|Out(T)|^{3/2}$ when $m=3$. In the same way, no simple group satisfying these inequalities. If $m=2$, then $|T|<4|Out(T)|^3$ by (\ref{eq3.2}). It also follows from \cite[Lemma 2.2]{lu} that $T$ is isomorphism to $A_{5}$ or $A_{6}$. In both cases, however, the inequality $|T|<4|Out(T)|^3$ is violated.\\
\indent Hence, $G$ is not of simple diagonal type.\vspace{3mm}$\hfill\square$\\
\indent Now assume that $G$ is a primitive group of product action type. Let $H$ be of almost simple or simple diagonal type on a set $\Gamma$ of size $w$, and $G \leq H\wr S_{\ell}$ be a primitive group acting flag-transitively on $\mathcal{D}$, with a product action on $\mathcal{P}=\Gamma^{\ell}$ $(\ell\ge 2)$. We first will reduce to $\ell=2.$
	\begin{lemma}\label{lp}
		Assume that $\mathcal{D}$ is a symmetric $2$-$(v,k,\lambda)$ design, where the parameters satisfying $\gcd(k,\lambda)=p$ is a prime. Let $G\le Aut(\mathcal{D})$ be a flag-transitive point-primitive group with product action. Then $p\le w$ and $\ell=2$.
	\end{lemma}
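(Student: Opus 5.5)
Our plan is to work directly from Lemma \ref{lemma3.2}(2) with $e=1$:
\begin{equation*}
\ell^2 p=(w^{\ell-2}+2w^{\ell-3}+\cdots+(\ell-2)w+\ell-1)\lambda^*x^2+\frac{\ell x+\ell\lambda^* x^2}{w-1},
\end{equation*}
where $x\ge 1$ and, as arranged at the start of this section, $\lambda^*\ge 2$ and $p$ is odd (so $p\ge 3$). Every term on the right is positive, so the equation yields upper bounds on $w$ and $\ell$ in terms of $p$. To reverse this and get $p\le w$, we will use Lemma \ref{lemma3.2}(1). The approach is to split into two steps: first show $p\le w$ from divisibility, then feed that bound back into the displayed equation to force $\ell=2$.

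\textbf{Step 1 ($p\le w$).} The identity $\lambda^*(v-1)=k^*(k-1)$ with $\gcd(k^*,\lambda^*)=1$ gives $k^*\mid v-1=w^\ell-1$. Lemma \ref{lemma3.2}(1) gives $k^*\mid \ell(w-1)$. For the prime $p$ itself, we would recall how $x$ arises in \cite{lu}: one has $\ell(w-1)=k^*x$ (the analogue of $x_1$). Dropping the first summand, which is nonnegative, gives
\begin{equation*}
\ell^2 p\ge \frac{\ell x(1+\lambda^*x)}{w-1}.
\end{equation*}
Combining this with $k=k^*p$ and $\lambda v<k^2$ should bound $p$. The cleanest route is probably the following. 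Since $p\mid k$ and $p\mid\lambda$, we have $p\mid k(k-1)-\lambda(v-1)$ trivially, which is not useful. Instead we use $k\le \lambda\ell(w-1)$, i.e.\ $k^*\le \lambda^*\ell(w-1)$, together with $k^2>\lambda v=\lambda w^\ell$. These give
\begin{equation*}
k^{*2}p^2>\lambda^* p\, w^\ell,
\end{equation*}
and hence $p>\lambda^* w^\ell/k^{*2}\ge w^\ell/(\lambda^*\ell^2(w-1)^2)$. That is a lower bound on $p$, not an upper one, so it does not do the job. The upper bound must therefore come from the displayed equation: with $x\ge1$ and $\lambda^*\ge2$, the first term is at least $2(\ell-1)$ when $\ell=2$, and much larger when $\ell\ge 3$. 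So the key is really to show $p\le w$ through $\ell^2p$ being small relative to $w$. We expect $p\le w$ to come from $p\mid \lambda$ and $p\mid k$ combined with $k^*\mid\ell(w-1)$ and the inequality above: $\ell^2 p\cdot(w-1)\ge \ell x(1+\lambda^* x)$ gives $\ell p(w-1)\ge x(1+2x)$. We then bound $x$ from below using $k^*x=\ell(w-1)$ and $k^{*}$ small. This interplay between the equation and the flag-counting is the main obstacle, and we would try substituting $x=\ell(w-1)/k^*$ into the equation to obtain an explicit relation between $p$, $k^*$, $\ell$ and $w$.

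\textbf{Step 2 ($\ell=2$).} Given $p\le w$, suppose $\ell\ge3$. The first term on the right is then at least $(w+2)\lambda^*x^2\ge 2(w+2)$ for $\ell=3$, and of order $w^{\ell-2}$ in general. So $\ell^2 w\ge \ell^2p>2w^{\ell-2}$. For $\ell\ge 4$ this gives $\ell^2>2w^{\ell-3}$, which forces $w$ small ($w\le 8$ for $\ell=4$, etc.) or $\ell$ small. The finitely many remaining $(\ell,w)$ can be eliminated by direct check against $k^*\mid \ell(w-1)$, $k^*\mid w^\ell-1$, $k^*\ge3$, and the exact equation, noting that $w\ge5$ since $H$ is almost simple or diagonal. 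For $\ell=3$, the equation reads $9p=(w+2)\lambda^*x^2+3x(1+\lambda^*x)/(w-1)$, and $9w\ge 9p>2(w+2)x^2$ forces $x=1$ or $2$. Each such case is then a small Diophantine check with $(w-1)\mid 3(1+\lambda^*x)x$, which we expect to yield a contradiction.
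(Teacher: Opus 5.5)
\textbf{Gap: Step 1 never proves $p\le w$, and the route you propose cannot.} You note yourself that the bound from $\lambda v<k^2$ points the wrong way. Your fallback is to substitute $x=\ell(w-1)/k^*$ into the equation of Lemma~\ref{lemma3.2}(2). That cannot succeed, because the purely numerical constraints you are using can be satisfied with $p>w$.

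For instance, take $\ell=2$, $w=7$, $k^*=3$, $\lambda^*=2$, $p=11$. Then $x=4$ and $(v,k,\lambda)=(49,33,22)$. All of the following hold:
\begin{itemize}
\item $\lambda(v-1)=k(k-1)=1056$;
\item $k^*\mid 2(w-1)$ and $k^*\mid w^2-1$;
\item $\gcd(k^*,\lambda^*)=1$ and $\lambda v<k^2$;
\item the equation of Lemma~\ref{lemma3.2}(2) holds: $44=32+12$.
\end{itemize}
So no manipulation of the parameter relations alone will give $p\le w$.

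\textbf{The missing idea is group-theoretic.} By Lemma~\ref{l1}(2), $p\mid k$ and $k\mid |G_\alpha|$. Also $G\le H\wr S_\ell\le S_w\wr S_\ell$, so $p$ divides $(w!)^\ell\,\ell!$, and hence $p\le w$ or $p\le\ell$. This is how the paper begins. It then treats the case $p\le\ell$ via $w^\ell=v<\frac12k^{*2}p\le\frac12\ell^3(w-1)^2$.

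Your own inequality $\ell^2p>2w^{\ell-2}$ handles that case even faster. If $p\le\ell$, it gives $\ell^3>2\cdot 5^{\ell-2}$, so $\ell\le 4<5\le w$, and again $p\le w$.

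\textbf{Once $p\le w$ is available, your Step 2 is sound.} It differs from the paper only in bounding via the displayed equation rather than via $v<\frac12k^{*2}p$. The cases go as follows.
\begin{itemize}
\item For $\ell\ge5$ the bound is immediately violated.
\item For $\ell=4$ it leaves only $w=5$. There the equation becomes $16p=39\lambda^*x^2+x$, which has no solution with $p\le5$.
\item For $\ell=3$ your check leaves $(\lambda^*,x,w,p)=(6,1,8,7)$. This passes $(w-1)\mid 3x(1+\lambda^*x)$, so it must be removed using $\gcd(k^*,\lambda^*)=1$ (here $k^*=21$).
\end{itemize}
Keeping $p\le w$ in force also excludes the paper's other candidate $(8,1,10,11)$ at once, since $p=11>w=10$. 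So the Magma computation for $(10^3,297,88)$ is not actually needed.
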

	{\bfseries Proof}. Since $G$ is of product action type, then $G \leq H^{\ell} \rtimes S_{\ell}\leq (S_w)^{\ell}\rtimes S_{\ell}$. It follows that $|G|\mid w!^{\ell}\ell!$. The flag-transitivity of $G$ gives $k\mid w!^{\ell}\ell!$, and hence $p\mid w!^{\ell}\ell!$. Suppose that $p$ is a divisor of $\ell!$, then $p\le \ell$. Using Lemma \ref{l1}(1), the equation can be written as $v=\frac{k^*(k^*p-1)}{\lambda^*}+1$, and so $v<\frac{1}{2}k^{*2}p$ by $\lambda^*\ge 2$. Moreover, by Lemma \ref{lemma3.2}(1), $k^*\mid \ell(w-1)$. Thus, 
	\begin{equation}\label{eq3.3}
		w^\ell=v<\frac{1}{2}k^{*2}p\le \frac{1}{2}p\ell^2(w-1)^2\le \frac{1}{2}\ell^3(w-1)^2<\frac{1}{2}\ell^3w^2.
	\end{equation}
It yields that $w^{\ell-2}<\frac{1}{2}\ell^3$, and thus $\ell\le 4.$ From $p\le \ell \le 4$ and $p$ is an odd prime, we get $p=3$. Combining it with (\ref{eq3.3}), we have $\ell=3$ and 
$5\le w\le11$. Further, the value of $k^*$ is determined by $k^*\mid \ell(w-1)$. But we find that $\lambda^*=\frac{k^*(k^*p-1)}{w^\ell-1}=\frac{k^*(3k^*-1)}{w^3-1}$ is not an integer for each pair $(w,k^*)$, a contradiction.\\
\indent Hence, $p\mid w!$, and so $p\le w.$ Recall that $w^{\ell}<\frac{1}{2}k^{*2}p$. Then $w^{\ell}< \frac{1}{2}w\ell^2(w-1)^2<\frac{1}{2}\ell^2w^3$, and so $w^{\ell-3}<\frac{1}{2}\ell^2$. Since $w\ge 5$, we get $\ell=2, 3$ or $(\ell,w)=(4,5)$, $(4,6)$, $(4,7),$ respectively. The last two cases are excluded as they do not satisfy the inequality $w^{\ell}< \frac{1}{2}w\ell^2(w-1)^2$. If $(\ell,w)=(4,5)$ then $v=625$, and since $k^*\mid \ell(w-1)$ we have $k^*=4, 8$ or $ 16$. If $k^*=4$, Lemma \ref{l1}(1) yields $156\lambda^*=4p-1$, which is impossible. Similarly, $k^*\neq 8$. If $k^*=16$, again using Lemma \ref{l1}(1), we have $39\lambda^*=16p-1$. Note that $2\le \lambda^*<k^*$ and $\lambda^*$ is odd, for each such $\lambda^*$, the previous equation has no possible solution for $p$, a contradiction. Therefore, $\ell=2$ or $3$.\\
\indent Suppose next that $\ell=3$. Let $3(w-1)=k^*x$, where $x$ is a positive integer. It follows from Lemma \ref{lemma3.2}(2) that
\begin{equation}\label{eq3.5}
	9p=(w+2)\lambda^*x^2+\dfrac{3x+3\lambda^*x^2}{w-1},
\end{equation}
and so $(w+2)\lambda^*x^2<9p$. Combining it with $p\le w$, we have $\lambda^*x^2<9$, and hence $x\le 2$. By (\ref{eq3.5}), $(w-1)\mid (3x+3\lambda^*x^2)$, this implies that $w-1\le 3x+3\lambda^*x^2<33$. Thus $5\le w\le 33.$
For these values of $w,$ $\lambda^*$ and $x$, we observe that there exist exactly two solutions satisfying Equation (\ref{eq3.5}). That is, $(\lambda^*,x,w,p)=(6,1,8,7)$ or $(8,1,10,11)$. The first can be ruled out by $k^*=3(w-1)$ and $\gcd(k^*,\lambda^*)=1$. It is easy to see that $(v,k,\lambda)=(10^3,297,88)$ for the latter case. Note that $G$ is flag-transitive, then $vk\mid |G|$, and so $G=A_{1000}$ or $S_{1000}$ by the list of primitive groups of degree $10^3$ (see \cite{magma}). This contradicts the fact that $G$ is of product action. Hence, $\ell=2$.
$\hfill\square$\vspace{3mm}\\
\indent If $\ell=2$, from a geometry perspective, then there are two Cartesian lines through any point in the design. In other words, putting
 $\alpha=(\alpha_1,\alpha_2)\in \mathcal{P}$, the two Cartesian lines through $\alpha$ are
$\mathcal{G}_{\alpha,1}=\{(\gamma,\alpha_2)|\gamma\in \Gamma\}=\Gamma\times \{\alpha_{2}\}$ and $\mathcal{G}_{\alpha,2}=\{(\alpha_1,\gamma)|\gamma\in \Gamma\}=\{\alpha_{1}\}\times \Gamma$. Moreover, $\mathcal{G}_{\alpha,1}=\alpha^{T_1}$ and $\mathcal{G}_{\alpha,2}=\alpha^{T_2}$.\\
\indent The next lemma first appeared in \cite[Theorems 3.7-3.8]{2023}. However, the proof contains some errors, especially in \cite[Theorem 3.7]{2023}, so for completeness, we give a new proof under the condition that $\gcd(k,\lambda)$ is an odd prime based on the methods in \cite{2023,100}.
\begin{lemma}\label{prop2.6}
	Let $\mathcal{D}$ be a symmetric $2$-$(v,k,\lambda )$ design with $v=w^2$ and $\gcd(k,\lambda)$ odd prime, admitting a flag-transitive, point-primitive automorphism group $G$ with product action. And let $x_1=\frac{2\lambda(w-1)}{k}$. Then exactly one of the following cases holds.
	\begin{enumerate}
		\item[\rm (1)]There exists a flag $(\alpha,B)$ such that $B$ intersects only one Cartesian line through $\alpha$, and $x_1+1$ divides $k$;
		\item[\rm (2)]Every block through any fixed point $\alpha$ intersects each of the Cartesian lines through $\alpha$, and $\frac{x_1}{2}+1$ divides $k$.
	\end{enumerate}
\end{lemma}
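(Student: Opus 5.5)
We will use the action of $G_\alpha$ on the two Cartesian lines $\mathcal{G}_{\alpha,1}=\alpha^{T_1}$ and $\mathcal{G}_{\alpha,2}=\alpha^{T_2}$. Since $T_1\times T_2\le M$ is transitive on $\mathcal{P}$, we have $G=MG_\alpha$. As $|G:M|=2$, this forces $G_\alpha\not\le M$, so some element of $G_\alpha$ interchanges the two lines through $\alpha$.

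For a flag $(\alpha,B)$ put $a_i(B)=|B\cap\mathcal{G}_{\alpha,i}|-1$ for $i=1,2$, so that $a_1(B)+a_2(B)=x_1$. By the counting argument preceding Lemma \ref{lemma2.4}, $x_1=2\lambda(w-1)/k$ is a positive integer.

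\textbf{The dichotomy.} By flag-transitivity, the property ``$B$ meets only one Cartesian line through $\alpha$'' (that is, $a_1(B)=0$ or $a_2(B)=0$) either holds for every flag or for none. This gives the two alternatives (1) and (2), and they are mutually exclusive, so exactly one holds.

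\textbf{Case (1).} By flag-transitivity every point $\beta\in B$ has one Cartesian line through $\beta$ meeting $B$ in $x_1+1\ge 2$ points (the ``big'' line at $\beta$), while the other line through $\beta$ meets $B$ only in $\beta$. Two distinct big lines cannot share a point $\gamma\in B$: they would be the two lines through $\gamma$, and both would meet $B$ in at least two points, contradicting the previous sentence. Hence the big lines partition $B$ into parts of size $x_1+1$, and $x_1+1\mid k$.

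\textbf{Case (2).} The aim is to prove $a_1(B)=a_2(B)=x_1/2$ for every flag. Granting this, the type-$1$ lines meeting $B$ partition $B$ into sets of size $x_1/2+1$; indeed, every $\beta\in B$ lies on exactly one type-$1$ line, and flag-transitivity gives each such intersection the same size $x_1/2+1$. Thus $\frac{x_1}{2}+1\mid k$. To establish $a_1=a_2$, split according to the possibilities $(|B^M|,|\alpha^{M_B}|)=(v,k)$, $(v,\frac k2)$ or $(\frac v2,k)$ listed before Lemma \ref{lemma2.4}.
\begin{itemize}
\item If $G_B\not\le M$, some $g\in G_B$ swaps the two factors. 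Then $g$ maps the partition of $B$ by type-$1$ lines onto the partition of $B$ by type-$2$ lines. Because $G_B$ is transitive on $B$, all parts within each partition have equal size, so $a_1+1=a_2+1$.
\item If $|\alpha^{M_B}|=\frac k2$, then Lemma \ref{lemma2.4}(2) and Lemma \ref{l2}(1) describe $x_1$ in terms of $c_1,c_2$. I expect these to yield $c_1=c_2$, either through the symmetric role of $g$ in the definition of $B_{1j}$ and $B_{2j}$, or through the formula $x_1=2(c_1+c_2)-2$.
\end{itemize}

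\textbf{Main obstacle.} The hardest case is $(|B^M|,|\alpha^{M_B}|)=(\frac v2,k)$, where $G_B=M_B$ and no element of $G_B$ interchanges the coordinates. Here the $G_\alpha$-orbit of blocks through $\alpha$ splits into two $M_\alpha$-orbits, with intersection profiles $(a,b)$ and $(b,a)$. Transitivity of $M_B$ on $B$ gives only $a+1\mid k$, $b+1\mid k$ and $a+b=x_1$. My first attempt would combine these divisibilities with $\gcd(k,\lambda)=p$ odd, writing $k=k^*p$ and $x_1=2\lambda^*(w-1)/k^*$, together with Lemma \ref{l2}(2), namely $x_1=c_1+c_2-2$ with $c_i\mid k$. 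The goal is either to force $a=b$ or to derive a contradiction via the symmetric design's dual (block) structure. This is exactly where the argument of \cite{2023} is said to be faulty, so it needs the most care.
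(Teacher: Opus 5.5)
Your dichotomy (the property ``$B$ meets only one Cartesian line through $\alpha$'' is $G$-invariant, so it holds for all flags or for none) is correct. Your Case (1) partition argument is also correct and matches the paper. So is the last step of Case (2) once $a_1=a_2$ is known.

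\textbf{The gap.} The equality $a_1(B)=a_2(B)$, which is the whole content of Case (2), is never proved.
\begin{itemize}
\item Your first bullet only works when $M_B$ is transitive on $B$, i.e.\ when $(|B^M|,|\alpha^{M_B}|)=(v,k)$.
\item When $|\alpha^{M_B}|=\frac k2$ (also a case with $G_B\not\le M$), $M_B$ has two orbits $O_1\ni\alpha$ and $O_2$ on $B$, interchanged by $G_B\setminus M_B$. Only $M_B$ preserves the type-$1$ partition, so its parts may have size $a+1$ on $O_1$ and $b+1$ on $O_2$. Transitivity of $G_B$ gives nothing more, and your second bullet is only an expectation.
\item The case $(\frac v2,k)$ is left open.
\end{itemize}

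\textbf{The missing idea is parity.} This is exactly where ``$\gcd(k,\lambda)=p$ odd'' enters. Suppose the profiles $(r_1,r_2)$ and $(r_2,r_1)$ of blocks through $\alpha$ differ. The paper double-counts pairs $(\beta,C)$ with $\alpha,\beta\in C$ and $\beta\in\mathcal{G}_{\alpha,1}\setminus\{\alpha\}$, obtaining $\lambda(w-1)=k_1r_1+k_2r_2$. Together with $kx_1=2\lambda(w-1)$ this gives $(k_1-k_2)(r_1-r_2)=0$; equivalently, $M_\alpha$ has two orbits of length $\frac k2$ on the blocks through $\alpha$. The parity chain then runs as follows.
\begin{itemize}
\item $k$ is even, so $k^*$ is even because $p$ is odd.
\item From $\lambda^*(v-1)=k^*(k-1)$ and $\gcd(k^*,\lambda^*)=1$ we get $k^*\mid v-1$, so $v$ and $w$ are odd.
\item Then $k-1=\frac{x_1(w+1)}{2}$ is odd, which forces $x_1$ to be odd.
\end{itemize}

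This disposes of your ``main obstacle'' at once, since $|B^M|=\frac v2$ needs $v$ even. More directly: in that case $|M_\alpha:M_{\alpha B}|=\frac k2$ makes $k$ even, while $v$ even makes $k^*$, hence $k$, odd. It also disposes of $(v,k)$, where your own swap argument gives $x_1=2a$, which is even (this is Lemma \ref{lemma2.4}(1)).

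What remains is $(v,\frac k2)$ with $x_1=a+b$ odd. There you need a contradiction, not $c_1=c_2$. The paper handles this case with Lemma \ref{lemma2.4}(2):
\begin{itemize}
\item $x_1$ odd forces $B_{20}\cap B_{21}=\emptyset$, so $\frac k2=|B\cap\alpha^{T_2}|\,|B_{20}|$;
\item it then compares two blocks $B,C$ through $\alpha$ with interchanged profiles via $|B_{20}|=|C_{20}|$.
\end{itemize}
If you follow that route, check the last equality carefully. Any element of $G_\alpha$ mapping $B$ to $C$ must interchange the coordinates, so a priori it only yields $|C_{20}|=|B_{10}|$, which does not force $r_1=r_2$.
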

{\bfseries Proof}. (1) Let $(\alpha,B)$ be the flag such that $B$ intersects a Cartesian line through $\alpha=(\alpha_{1},\alpha_{2})$. Without loss of generality, suppose that $B$ intersects the second Cartesian line through $\alpha$. That is, $(B\cap \mathcal{G}_{\alpha,1})\setminus \{\alpha\}=\emptyset$ and $(B\cap \mathcal{G}_{\alpha,2})\setminus \{\alpha\}\neq\emptyset$. Recall that $\mathcal{G}_{\alpha}=(\alpha^{T_1}\cup \alpha^{T_2})\setminus\{\alpha\}$, then $\mathcal{G}_{\alpha}=(\mathcal{G}_{\alpha,1}\cup \mathcal{G}_{\alpha,2})\setminus\{\alpha\}$.
Obviously, $|B\cap \mathcal{G}_{\alpha,2}|=|B\cap \mathcal{G}_{\alpha}|+1=x_1+1$.\\
\indent We assert that for any element $\beta=(\beta_1,\beta_2)$ of the block $B$, $B$ intersects only one Cartesian line through $\beta$. In fact, since $G$ is flag-transitive, then there exists $g=(g_1,g_2;\sigma) \in G$ such that $(\alpha,B)^g=(\beta,B)$. In other words, $\alpha^g=\beta$ and $B^g=B$. It follows from $G \le H\wr S_2$ that $g=(g_1,g_2;(1))$ or $(g_1,g_2;(12))$. If $g=(g_1,g_2;(1))$, then $\alpha_{1}^{g_1}=\beta_{1}$ and $\alpha_{2}^{g_2}=\beta_{2}$. It implies that
\begin{equation*}
	\begin{split}
		\mathcal{G}_{\alpha,1}^g=(\Gamma\times \{\alpha_{2}\})^{(g_1,g_2;(1))}=\Gamma\times \{\alpha_{2}^{g_2}\}=\Gamma\times \{\beta_{2}\}=\mathcal{G}_{\beta,1},\\
	\mathcal{G}_{\alpha,2}^g=(\{\alpha_{1}\}\times \Gamma)^{(g_1,g_2;(1))}= \{\alpha_{1}^{g_1}\}\times \Gamma= \{\beta_{1}\}\times \Gamma=\mathcal{G}_{\beta,2}.
	\end{split}
\end{equation*}
So $(B\cap \mathcal{G}_{\alpha,2})^g=B\cap  \mathcal{G}_{\beta,2}$, which means that $|B\cap  \mathcal{G}_{\beta,2}|=|B\cap  \mathcal{G}_{\alpha,2}|=x_1+1$. Note that $\mathcal{G}_{\alpha}^g={((\mathcal{G}_{\alpha,1}\cup \mathcal{G}_{\alpha,1})\setminus\{\alpha\})}^g=(\mathcal{G}_{\beta,1}\cup \mathcal{G}_{\beta,1})\setminus\{\beta\}=\mathcal{G}_{\beta}$. It yields that $|B\cap \mathcal{G}_{\beta}|=|B\cap \mathcal{G}_{\alpha}|=x_1$, and so $(B\cap \mathcal{G}_{\beta,1})\setminus \{\beta\}=\emptyset$. In this way $B$ intersects only the second Cartesian line through $\beta$. If $g=(g_1,g_2;(12))$, then $\alpha_{1}^{g_1}=\beta_{2}$ and $\alpha_{2}^{g_2}=\beta_{1}$. Moreover, we get 
\begin{equation*}
	\begin{split}
		\mathcal{G}_{\alpha,1}^g=(\Gamma\times \{\alpha_{2}\})^{(g_1,g_2;(12))}=\{\alpha_{2}^{g_2}\}\times \Gamma=\{\beta_1\}\times \Gamma=\mathcal{G}_{\beta,2},\\
		\mathcal{G}_{\alpha,2}^g=(\{\alpha_{1}\}\times \Gamma)^{(g_1,g_2;(12))}=\Gamma\times  \{\alpha_{1}^{g_1}\}=\Gamma\times \{\beta_{2}\}=\mathcal{G}_{\beta,1}.
	\end{split}
\end{equation*}
It implies that $|B\cap  \mathcal{G}_{\beta,1}|=|B\cap  \mathcal{G}_{\alpha,2}|=x_1+1$. Combining it with $|B\cap \mathcal{G}_{\beta}|=|B\cap \mathcal{G}_{\alpha}|=x_1$, then $(B\cap \mathcal{G}_{\beta,2})\setminus \{\beta\}=\emptyset$, and so $B$ intersects only the first Cartesian line through $\beta$. Thus the assertion is proved.\\
\indent Let $B_0=B\cap \mathcal{G}_{\alpha,2}$, the size of this set is $x_1+1$. Now choose an element $\gamma_1\in B\setminus B_0$, by above assertion, we know that $B$ intersects only one Cartesian line through $\gamma_1$. Without loss of generality, we assume that $(B\cap \mathcal{G}_{\gamma_1,1})\setminus \{\gamma_1\}\neq \emptyset$, and let $B_1=B\cap \mathcal{G}_{\gamma_1,1}$. Clearly, $|B_1|=x_1+1$. In the same way as before, we take $\gamma_2\in B\setminus(B_0\cup B_1)$, and then $B$ intersects only one Cartesian line through $\gamma_2$. Putting $B_2=B\cap \mathcal{G}_{\gamma_2,2}.$ The process is continued in this way until no more points can be taken in $B$, thus we obtain a collection of points $\gamma_0=\alpha$, $\gamma_1, \dots, \gamma_i$, where $\gamma_j\in B\setminus (B_0\cup\dots \cup B_{j-1})$ for $0\le j\le i$ and $B_j$ as the intersection of $B$ with the only Cartesian line through $\gamma_j$ that intersects $B$ non-empty. Therefore, $B=\bigcup_{j=0}^{i}B_j$.\\
\indent By construction, we know that if $\gamma_e\neq \gamma_f$, then $B_e\neq B_f$ with $0\le e,f\le i$. Moreover, each pair of sets in the above collection is disjoint. That is, if $B_e\neq B_f$, then $B_e\cap B_f=\emptyset$ with $0\le e,f\le i$ and $e\neq f$. Otherwise, there exists an element $\gamma\in B_e\cap B_f$. Let $B_e=B\cap \mathcal{G}_{\gamma_e,\delta}$ and $B_f=B\cap \mathcal{G}_{\gamma_f,\epsilon}$ with $\delta,\epsilon\in \{1,2\}$. If $\delta=\epsilon$, then $\mathcal{G}_{\gamma_e,\delta}=\mathcal{G}_{\gamma_f,\delta}$, and so $B_e=B_f$, a contradiction. If $\delta\neq \epsilon$, then $\mathcal{G}_{\gamma,\delta}=\mathcal{G}_{\gamma_f,\delta}$ and $\mathcal{G}_{\gamma,\epsilon}=\mathcal{G}_{\gamma_f,\epsilon}$, contrary to $B$ intersects only one Cartesian line through $\gamma$.\\
\indent Thus, $|B_j|\mid |B|$, i.e., $(x_1+1)\mid k$. This completes the proof of part (1).\\
\indent (2) Let $(\alpha,B)$ be the flag such that $B$ intersects two Cartesian line through $\alpha=(\alpha_{1},\alpha_{2})$. Let $r_1=|(B\cap \mathcal{G}_{\alpha,1})\setminus \{\alpha\}|$ and $r_2=|(B\cap \mathcal{G}_{\alpha,2})\setminus \{\alpha\}|$. It is easy to know that $r_1, r_2\ge 1$ and $x_1=r_1+r_2$.\\
\indent We claim that for any blocks $C$ through $\alpha$, then $C$ intersect the two Cartesian lines through $\alpha$. In fact, by the flag-transitivity of $G$, there exists $g=(g_1,g_2;\sigma) \in G$ such that $(\alpha,B)^g=(\alpha,C)$. Similar to Part (1), if $g=(g_1,g_2;(1))$, then $\alpha_{1}^{g_1}=\alpha_{1}$ and $\alpha_{2}^{g_2}=\alpha_{2}$. It follows that $\mathcal{G}_{\alpha,1}^g=\mathcal{G}_{\alpha,1}$ and $\mathcal{G}_{\alpha,2}^g=\mathcal{G}_{\alpha,2}$. So 
\begin{equation*}
	\begin{split}
		|C\cap  \mathcal{G}_{\alpha,1}|=|B\cap  \mathcal{G}_{\alpha,1}|=r_1+1,\\ |C\cap  \mathcal{G}_{\alpha,2}|=|B\cap  \mathcal{G}_{\alpha,2}|=r_2+1.
	\end{split}
\end{equation*}
 If $g=(g_1,g_2;(12))$, then $\mathcal{G}_{\alpha,1}^g=\mathcal{G}_{\alpha,2}$ and $\mathcal{G}_{\alpha,2}^g=\mathcal{G}_{\alpha,1}$. It follows that $|C\cap  \mathcal{G}_{\alpha,2}|=|B\cap  \mathcal{G}_{\alpha,1}|=r_1+1$ and $|C\cap  \mathcal{G}_{\alpha,1}|=|B\cap  \mathcal{G}_{\alpha,2}|=r_2+1.$ The claim is proved.\\
\indent In the same way as in Part (1), the conclusion that the block $B$ intersects the two Cartesian line through $\beta$ holds for any points $\beta\in B$. Furthermore, $|B\cap \mathcal{G}_{\beta}|=x_1$. Note that there are $k$ blocks containing $\alpha$. Assume that there are $k_1$ blocks intersecting with $\mathcal{G}_{\alpha,1}\setminus \{\alpha\}$ as $r_1$, and $k_2$ blocks intersecting it as $r_2$. It follows that $k=k_1+k_2$. Counting $\Sigma_2=\{(\beta,B)|\beta \in B,\beta\neq \alpha,\alpha\in B, \beta\in \mathcal{G}_{\alpha,1}\}$ in two way, we conclude that $\lambda(w-1)=k_1r_1+k_2r_2$. But from Lemma \ref{lemma3.2}(1) the equation $2\lambda(w-1)=kx_1=(k_1+k_2)(r_1+r_2)$ also holds, then $(k_1-k_2)(r_1-r_2)=0$, and so $k_1=k_2=\frac{k}{2}$ or $r_1=r_2=\frac{x_1}{2}$.\\
\indent We assert that if $k_1=k_2=\frac{k}{2}$, then $r_1=r_2=\frac{x_1}{2}$. Assume that $k_1=k_2=\frac{k}{2}$, which means that $k$ is even. Note that $k=k^*p$ and $p$ is an odd prime, then $k^*$ must be even. Since $k^*\mid (v-1)$, then $v$ is odd. Recall that $x_1=\frac{2\lambda(w-1)}{k}$, and since $\lambda (w^2-1)=k(k-1)$, then $x_1=\frac{2(k-1)}{w+1}$. It implies that $k-1=x_1 \cdot \frac{w+1}{2}$, and so $x_1$ is odd. By Lemma \ref{lemma2.4}, we know that $|\alpha^{M_B}|=\frac{k}{2}$ and $B_{20}\cap B_{21}=\emptyset$. It follows that $|\alpha^{M_B}|=|B\cap \mathcal{G}_{\alpha,2}||B_{20}|$. Choose the two blocks $B, C$ through $\alpha$ such that $|(B\cap \mathcal{G}_{\alpha,2})\setminus \{\alpha\}|=r_1$ and $|(C\cap \mathcal{G}_{\alpha,2})\setminus \{\alpha\}|=r_2$. Recall that $B_{20}=\{\alpha_{1}|(\alpha_{1},\alpha_{2})\in \alpha^{M_{B}}\}$ and $C_{20}=\{\alpha_{1}|(\alpha_{1},\alpha_{2})\in \alpha^{M_{C}}\}$. Since $G$ is flag-transitive, then $|B_{20}|=|C_{20}|$, and hence $|B\cap \mathcal{G}_{\alpha,2}|=|C\cap \mathcal{G}_{\alpha,2}|$. This means that $r_1=r_2=\frac{x_1}{2}$.\\
\indent Putting $B_0=B\cap \mathcal{G}_{\alpha,2}$. The number of elements in $B_0$ is $\frac{x_1}{2}+1$. Taking an element $\gamma_1\in B\setminus B_0$, we know that the number of $B$ intersects the second Cartesian line through $\gamma_1$ is $\frac{x_1}{2}+1$. Let $B_1=B\cap \mathcal{G}_{\gamma_1,2}$. We repeat the above steps until there are no more elements in $B$, and we obtain some points $\gamma_0=\alpha$, $\gamma_1,\cdots,\gamma_i$, where $\gamma_j\in B\setminus (B_0\cup\dots \cup B_{j-1})$ and $B_j=B\cap \mathcal{G}_{\gamma_j,2}$ for $0\le j\le i$. Thus, $B=\bigcup_{j=0}^{i}B_j$, where $|B_j|=\frac{x_1}{2}+1$ for $0\le j\le i$. Obviously, if $\gamma_e\neq \gamma_f$, then $B_e\neq B_f$ with $0\le e,f\le i$. Moreover, every two sets in the above collection are disjoint. If there exists an element $\gamma\in B_e\cap B_f$, where $B_e=B\cap \mathcal{G}_{\gamma_e,2}$ and $B_f=B\cap \mathcal{G}_{\gamma_f,2}$ for $0\le e\neq f\le i$. It implies that $\mathcal{G}_{\gamma_e,2}=\mathcal{G}_{\gamma_f,2}$, so that $B_e=B_f$, a contradiction. Therefore, $|B_j|\mid |B|$, and so $(\frac{x_1}{2}+1)\mid k$.$\hfill\square$\vspace{3mm}
\begin{lemma}\label{prop2.7} {\rm (\cite[Lemma 3.12]{2023})}
	Let $\mathcal{D}$ be a symmetric $2$-$(v,k,\lambda)$ design, admitting a flag-transitive, point-primitive group $G\le Aut(\mathcal{D})$. Assume that $v = w^2$ and $\gcd(k,\lambda)=p$, where $p$ is a prime. If for any point $\alpha$ in the design, each block that contains it intersects the two Cartesian lines
	through $\alpha$, then either $G$ does not have a product action or $\mathcal{D}$ is a Menon design.
\end{lemma}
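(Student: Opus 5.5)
The plan is to extract from the proof of Lemma~\ref{prop2.6}(2) a rigid "grid" structure on each block and then show by pure arithmetic that it forces the Menon parameters $v=4(k-\lambda)$, namely $w=2u$, $k=2u^2-u$, $\lambda=u^2-u$. Assume $G$ has a product action, since otherwise there is nothing to prove. Then $\ell=2$ by Lemma~\ref{lp}, $v=w^2$, and we are in case (2) of Lemma~\ref{prop2.6}.

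\textbf{Step 1: the grid structure.} Put $r_i=|(B\cap\mathcal{G}_{\alpha,i})\setminus\{\alpha\}|$. The proof of Lemma~\ref{prop2.6}(2) shows that $r_1=r_2=\frac{x_1}{2}$ for every flag $(\alpha,B)$. Transporting by flag-transitivity exactly as in the assertion there, we get for every $\beta\in B$
\[
|B\cap\mathcal{G}_{\beta,1}|=|B\cap\mathcal{G}_{\beta,2}|=s, \qquad s:=\tfrac{x_1}{2}+1 .
\]
Here we use that an element of $G$ either fixes or swaps the two Cartesian lines through a point, and $r_1=r_2$ makes both options give the same count. Hence the rows $\mathcal{G}_{\beta,2}$ meeting $B$ partition $B$ into parts of size $s$, so $s\mid k$; this is the divisibility already proved in Lemma~\ref{prop2.6}(2).

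\textbf{Step 2: expressing $k$ and $\lambda$ in terms of $s$ and $w$.} From $\lambda(w^2-1)=k(k-1)$ and $x_1=\frac{2\lambda(w-1)}{k}$ we get, as in the proof of Lemma~\ref{prop2.6}, that $x_1=\frac{2(k-1)}{w+1}$. Therefore $k-1=(s-1)(w+1)$, that is, $k=s(w+1)-w$. Since $s\mid k$, this gives $s\mid w$. Write $w=ms$. Then
\[
k=s(ms+1-m), \qquad \lambda=\frac{k(s-1)}{w-1}=\frac{s(s-1)(ms-m+1)}{ms-1}.
\]
The value of $\lambda$ follows from $\lambda(w-1)=k(s-1)$. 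Moreover $s\ge 2$, because $x_1\ge 2$ (both $r_i\ge1$).

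\textbf{Step 3: the divisibility argument.} This is the step that needs care, but it reduces to one congruence. We have $ms-1\mid s(s-1)(ms-m+1)$, and $\gcd(m,ms-1)=1$. Multiply by $m$ and reduce using $ms\equiv1\pmod{ms-1}$:
\[
m\,s(s-1)(ms-m+1)\equiv (s-1)(2-m).
\]
Hence $ms-1\mid (s-1)(m-2)$. We now split on $m$.
\begin{itemize}
\item If $m=1$, then $w=s$ and $k=s^2=v$, so the design is trivial; this is excluded.
\item If $m\ge3$, then
\[
0<(s-1)(m-2)=ms-1-(2s+m-3)<ms-1,
\]
so the divisibility is impossible.
\item Therefore $m=2$, i.e.\ $w=2s$. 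Then $k=s(s+1)-\ldots$ reduces to $k=2s^2-s$ and $\lambda=s^2-s$, so $k-\lambda=s^2=\frac{v}{4}$, and $\mathcal{D}$ is a Menon design.
\end{itemize}

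The main obstacle I expect is Step 1. It requires checking carefully that the equality $r_1=r_2$, which the proof of Lemma~\ref{prop2.6}(2) derives at a single point $\alpha$ (via Lemma~\ref{lemma2.4} in the subcase $k_1=k_2$), really holds uniformly for every point of $B$. Once the row and column intersections of $B$ are known to have constant size $s$ through every point, the rest is the elementary number theory above. Note that the hypothesis $\gcd(k,\lambda)=p$ enters only through Lemma~\ref{prop2.6}.
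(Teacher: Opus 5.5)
The paper gives no proof of this lemma; it imports it from \cite[Lemma 3.12]{2023}. Your proposal is therefore a different, self-contained route, and it is correct given Lemma \ref{prop2.6}(2).

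With $s=\frac{x_1}{2}+1$, the identity $x_1=\frac{2(k-1)}{w+1}$ gives $k=s(w+1)-w$. Hence $s\mid k$ forces $s\mid w$. Writing $w=ms$, integrality of $\lambda=\frac{k(s-1)}{w-1}$ reduces to $ms-1\mid (s-1)(m-2)$. This is impossible for $m\ge 3$ by size and gives $k=v$ for $m=1$. For $m=2$ it yields $(4s^2,2s^2-s,s^2-s)$ with $s=\gcd(k,\lambda)=p$.

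This makes the lemma an immediate arithmetic corollary of Lemma \ref{prop2.6}(2). That is more transparent than the citation and keeps the reasoning inside the paper, which matters since the authors themselves report errors in the neighbouring results \cite[Theorems 3.7--3.8]{2023}. The price is scope. Lemma \ref{prop2.6} assumes $\gcd(k,\lambda)$ is an odd prime, so you only cover odd $p$, while the statement allows $p=2$. This is harmless in Section 3, where $p$ is odd throughout.

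Your Step 1 is not needed. Step 2 uses only the conclusion $(\frac{x_1}{2}+1)\mid k$ of Lemma \ref{prop2.6}(2) together with $x_1=r_1+r_2\ge 2$. The uniformity over points of $B$ that you call the main obstacle is automatic, because both the hypothesis and the argument of Lemma \ref{prop2.6}(2) apply to every flag.

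The real weight sits upstream, in the subcase $k_1=k_2$, $r_1\neq r_2$ of that proof. There the paper shows $x_1$ is odd, so what is actually required is a contradiction. It then asserts $|B_{20}|=|C_{20}|$ by flag-transitivity. But since $|B\cap\mathcal{G}_{\alpha,2}|\neq|C\cap\mathcal{G}_{\alpha,2}|$, every element of $G_\alpha$ carrying $B$ to $C$ must interchange the two coordinates. Such an element only gives $|C_{20}|=|B_{10}|$, and $|B_{20}|=|C_{20}|$ is equivalent to $r_1=r_2$, so the step is circular.

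Arithmetic alone does not rule out this subcase either. For example, $(v,k,\lambda)=(61^2,280,21)$ with $(r_1,r_2)=(3,6)$ passes every divisibility and counting condition used there. So your proof is exactly as solid as Lemma \ref{prop2.6}(2). To make it airtight, that subcase needs a genuine argument.
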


Under the conditions of Lemma \ref{prop2.7}, we know that if the automorphism group $G$ is of product type, then $\mathcal{D}$ is a Menon design. It is well known that the existence of a symmetric design with parameters
$(4n^2,2n^2-n,n^2-n)$ is equivalent to the existence of a regular Hadamard
matrix of order $4n^2$. Such symmetric designs and its complementary designs
are called \emph{Menon designs} of order $n^2$. Note that $\gcd(k,\lambda)=n=p$ is a prime number, which forces $k-\lambda=p^2$. By \cite{lu}, we know $G$ is not of product type, a contradiction. Combining it with Lemma \ref{prop2.6}, we obtain the following corollary:
\begin{corollary}\label{coro1}
	If a symmetric $2$-$(v,k,\lambda)$ design $\mathcal{D}$ with $\gcd(k,\lambda)$ is an odd prime, admitting a flag-transitive point-primitive automorphism group $G$. Assume that $G$ is of product action type with $v=w^2$, then $B$ intersects only one Cartesian line through $\alpha$, where $(\alpha,B)$ is a flag of $\mathcal{D}$.
\end{corollary}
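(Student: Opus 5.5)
Corollary \ref{coro1} follows by combining the dichotomy of Lemma \ref{prop2.6} with Lemma \ref{prop2.7} and the Menon-design discussion just before the corollary. By Lemma \ref{lp}, a product-action $G$ forces $\ell=2$, so $v=w^2$ and Lemma \ref{prop2.6} applies: exactly one of its alternatives (1) or (2) holds. The plan is to exclude alternative (2), leaving (1). Then flag-transitivity upgrades ``there exists a flag'' to ``every flag'' $(\alpha,B)$.

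To exclude (2), I would argue by contradiction. Suppose every block through every point $\alpha$ meets both Cartesian lines $\mathcal{G}_{\alpha,1}$ and $\mathcal{G}_{\alpha,2}$. Since $G$ is assumed to be of product action type, Lemma \ref{prop2.7} forces $\mathcal{D}$ to be a Menon design. Its parameters are $(4n^2,2n^2-n,n^2-n)$, or those of the complement, $(4n^2,2n^2+n,n^2+n)$. In either case $\gcd(k,\lambda)=n\gcd(2n\mp1,n\mp1)=n$, because $2n\mp1-2(n\mp1)=\pm1$. So $n=p$ and $k-\lambda=p^2$. Invoking the result of \cite{lu} on flag-transitive point-primitive symmetric designs with $k-\lambda=p^2$ (or with these Menon parameters), $G$ cannot be of product type, which is a contradiction. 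Hence alternative (2) fails.

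Therefore alternative (1) of Lemma \ref{prop2.6} holds: some flag $(\alpha_0,B_0)$ has $B_0$ meeting only one Cartesian line through $\alpha_0$. For an arbitrary flag $(\alpha,B)$, flag-transitivity gives $g\in G\le H\wr S_2$ with $(\alpha_0,B_0)^g=(\alpha,B)$. Exactly as in the proof of Lemma \ref{prop2.6}(1), an element $g=(g_1,g_2;\sigma)$ maps the pair of Cartesian lines through $\alpha_0$ to the pair through $\alpha$, either preserving or swapping the indices. Hence $|B\cap\mathcal{G}_{\alpha,i}|$ equals $|B_0\cap\mathcal{G}_{\alpha_0,i^\sigma}|$, and $B$ also meets only one Cartesian line through $\alpha$.

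The main obstacle is the appeal to \cite{lu}. One must check that the cited result really covers $k-\lambda=p^2$ under exactly these hypotheses (flag-transitive, point-primitive, product type), and that both the Menon design and its complement give $\gcd(k,\lambda)=p$. If \cite{lu} is stated only for $\lambda\ge (k-\lambda)$ or similar, I would separately treat the complementary parameters via the $\gcd$ computation above.
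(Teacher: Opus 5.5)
Your proposal is correct and follows essentially the same route as the paper. The paper likewise excludes alternative (2) of Lemma~\ref{prop2.6} via Lemma~\ref{prop2.7}, notes that a Menon design with $\gcd(k,\lambda)=n=p$ has $k-\lambda=p^2$, and invokes \cite{lu}; your explicit gcd check for both Menon parameter sets and your flag-transitivity step extending the conclusion to every flag just fill in details the paper leaves implicit.
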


\indent Now $G \leq H\wr S_{2}=(H\times H)\rtimes S_{2}$ has a product action on $\mathcal{P}=\Gamma^2,$ where $H$ is a primitive group of almost simple or simple diagonal type acting on a set $\Gamma$. By Lemma \ref{lemma3.2}(1) we get $k\mid 2\lambda(w-1)$, and so $k^*\mid 2(w-1)$. Then there exists an integer $x$ such that $2(w-1)=k^*x$. It implies that 
\begin{equation*}
	x=\dfrac{2(w-1)}{k^*}.
\end{equation*}
Recall that $x_{1}=\frac{2\lambda(w-1)}{k}$, then $x_1=\lambda^*x.$\\
\indent By Corollary \ref{coro1}, Lemma \ref{prop2.6}(1) holds. Namely, there is a point $\alpha$ and a block $B$ through $\alpha$ such that $B$ intersects only one Cartesian line through $\alpha$. So we assume that $(x_1+1)\mid k$ in the following.
\begin{lemma}\label{l3.3}
 Suppose that $\mathcal{D}$ is a symmetric $2$-design with $\gcd(k,\lambda)$ prime, and $G\le Aut(\mathcal{D})$ is a flag-transitive point-primitive product action group. In addition, assume that $G\le H\wr S_{2}$ acts on $\mathcal{P}=\Gamma^{2}$. If $(x_{1}+1)\mid k$, then $k^*=4(x_{1}+1)$.
\end{lemma}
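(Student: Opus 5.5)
The plan is to turn the divisibility $(x_1+1)\mid k$ into $(x_1+1)\mid k^*$ using the basic equation, and then use the same equation to force $k^*\mid 4(x_1+1)$. This leaves a small number of cases for the cofactor $k^*/(x_1+1)$. We use the relations already fixed in the text:
\[
2(w-1)=k^*x,\qquad x_1=\lambda^*x,\qquad k=k^*p,\qquad \lambda=\lambda^*p,\qquad \gcd(k^*,\lambda^*)=1 .
\]
From $\lambda(w^2-1)=k(k-1)$ and $x_1=\frac{2\lambda(w-1)}{k}$ we get $2(k-1)=x_1(w+1)$, as in the proof of Lemma \ref{prop2.6}.

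First step. Reduce $2(k-1)=x_1(w+1)$ modulo $x_1+1$. Since $x_1\equiv -1$ and $k\equiv 0 \pmod{x_1+1}$, this gives $-2\equiv -(w+1)$, so $(x_1+1)\mid (w-1)$. Hence $(x_1+1)\mid 2(w-1)=k^*x$. Moreover $\gcd(x_1+1,x)=\gcd(\lambda^*x+1,x)=1$, so $(x_1+1)\mid k^*$. Write $k^*=m(x_1+1)$.

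Second step. Substitute $w+1=\frac{k^*x}{2}+2$ into $k-1=\frac{\lambda^*x(w+1)}{2}$ and multiply by $4$. This gives
\[
4k^*p=4+\lambda^*x^2k^*+4\lambda^*x=4(x_1+1)+\lambda^*x^2k^* ,
\]
that is, $k^*(4p-\lambda^*x^2)=4(x_1+1)$. Therefore $m(4p-\lambda^*x^2)=4$ and $m\in\{1,2,4\}$. The statement is exactly the claim that $m=4$, so it remains to exclude $m=1$ and $m=2$.

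Case $m=2$. Here $\lambda^*x^2=4p-2\equiv 2\pmod 4$. This forces $x$ odd and $\lambda^*$ even. But then $k^*=2(x_1+1)$ is also even, contradicting $\gcd(k^*,\lambda^*)=1$.

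Case $m=1$. Here $\lambda^*x^2=4(p-1)$ and $k^*=\lambda^*x+1$. If $x$ were odd, then $4\mid\lambda^*$, so $k^*$ is odd; then $w-1=\frac{k^*x}{2}$ is not an integer, which is impossible. So $x=2y$, and the parameters become
\[
p=\lambda^*y^2+1,\qquad k^*=2\lambda^*y+1,\qquad w=1+k^*y,\qquad x_1=2\lambda^*y .
\]
In particular $x_1$ is even.

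I expect this case to be the main obstacle, since the arithmetic alone does not kill it. For example, $y=1$, $\lambda^*=2$ gives $p=3$, $w=6$ and $(v,k,\lambda)=(36,15,6)$, which is a Menon design. The plan here is to bring in the group structure. By Corollary \ref{coro1}, $B$ meets only one Cartesian line through each of its points. In the notation of Lemma \ref{l2}, this gives $\{c_1,c_2\}=\{1,x_1+1\}$, so $x_1=c_1+c_2-2$. The next step is to run through the two possibilities $|\alpha^{M_B}|\in\{k,\frac{k}{2}\}$, using Lemma \ref{lemma2.4}(2) to control $B_{20}$ and $B_{21}$. The decomposition of $B$ into $k/(x_1+1)=p$ disjoint line segments from the proof of Lemma \ref{prop2.6}(1) must then be compatible with the action of $M_B$. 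I would try to show that this compatibility forces $p\mid |B_{20}|$ or a similar divisibility that contradicts $p=\lambda^*y^2+1$ with $y,\lambda^*\ge 1$. If that fails, the fallback is to test $k-\lambda=p(\lambda^*(2y-1)+1)$ against the product-type exclusion of \cite{lu}, as was done for Menon designs in the paragraph preceding Corollary \ref{coro1}.
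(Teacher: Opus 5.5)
Your Steps 1 and 2 and the case $m=2$ are correct. The identity $k^*(4p-\lambda^*x^2)=4(x_1+1)$ neatly combines the paper's $\gcd$ argument with Lemma \ref{lemma3.2}(2). The gap is the case $m=1$, i.e. $k^*=x_1+1$. This is the paper's Claim 2, where almost all of the work lies, and you only outline strategies for it.

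Your fallback via \cite{lu} needs $k-\lambda=p^2$. Since $p(\lambda^*(2y-1)+1)=p(\lambda^*y^2+1)$ only when $y=1$, it covers just the paper's $x=2$ case. For $y\ge 2$ the arithmetic genuinely survives. For example, $y=2$, $\lambda^*=3$ gives $p=13$, $k^*=x_1+1=13$, $w=27$ and $(v,k,\lambda)=(27^2,169,39)$, which satisfies every relation you derived.

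The divisibility you aim for does not help either. If $M_B$ is transitive on $B$, then all $p$ segments of $B$ are parallel, since $M$ preserves both coordinate directions. The projection of $B$ transverse to them then has exactly $p$ points, so a relation like $p\mid |B_{20}|$ simply holds and contradicts nothing.

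The paper closes this case group-theoretically. It excludes $x=2$ by \cite{lu}, giving $p\ge 11$. It then shows $\gcd(p,w)=\gcd(p,|\mathrm{Out}(T)|)=1$ and that $H$ is almost simple. Lemma \ref{lemma3.3}(1) then forces $p\mid|\alpha^{(T_i)_B}|$ for some $i$, so $x_1\ge p-1$. This pins down $x=4$, $w=2p+1$, $k=p^2$, which is then refuted.

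Alternatively, your own ingredients (Corollary \ref{coro1} and the segment decomposition from Lemma \ref{prop2.6}(1)) suffice if you count rather than divide. Every point of $B$ lies on exactly one segment of $x_1+1$ points, and the other Cartesian line through it meets $B$ only in that point. Say $h$ segments lie in lines $\Gamma\times\{\delta\}$ and $u$ in lines $\{\gamma\}\times\Gamma$. Then the number of lines $\Gamma\times\{\delta\}$ meeting $B$ is $h+u(x_1+1)\le w$, and the number of lines $\{\gamma\}\times\Gamma$ meeting $B$ is $u+h(x_1+1)\le w$. Adding gives $\frac{k(x_1+2)}{x_1+1}\le 2w$, so $k<2w$. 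But Lemma \ref{l1}(1) gives $k^2>\lambda w^2$, and $\lambda=p\lambda^*\ge 6$, so $k>2w$. This count does not even use $m=1$. As written, though, your proposal leaves the case $k^*=x_1+1$ unproved.
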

{\bfseries Proof}. By assumption, there exists an integer $a$ such that $k=a(x_1+1)$. On the other hand, $x_1=\frac{2\lambda(w-1)}{k}$, and since $\lambda (w^2-1)=k(k-1)$, then $x_1=\frac{2(k-1)}{w+1}$, which implies that $k=\frac{x_1(w+1)}{2}+1$. So, $a(x_1+1)=\frac{x_1(w+1)}{2}+1$. It follows that $w-1=(x_1+1)(w-2a+1)$. Hence,
\begin{equation}\label{eq3.55}
	k^*x=2(w-1)=2(x_1+1)(w-2a+1),
\end{equation}
 and $(x_1+1)\mid k^*x$. So that $(x_1+1)\mid k^*$ as $\gcd(x_1+1,x)=\gcd(\lambda^*x+1,x)=1$.\\
\indent Since $k^*$ is a divisor of $k=\frac{x_1(w+1)}{2}+1$, then $k^*\mid (2x_1(w+1)+4)$. Moreover, $k^*$ also divides $2(w-1)$, then $k^*\mid \gcd(2x_1(w+1)+4, 2(w-1)),$ and so $k^*\mid 4(x_1+1)$. Therefore, $k^*=x_{1}+1$, $2(x_1+1)$ or $4(x_1+1)$.\\
\indent Note that $\ell=2$, then 
\begin{equation}\label{eqqq3.7}
	4p=\lambda^*x^2+\dfrac{2x(\lambda^*x+1)}{w-1}.
\end{equation}
by Lemma \ref{lemma3.2}(2).\vspace{2mm}\\
{\bfseries Claim 1}. $k^*\neq 2(x_1+1)$.\\
\indent If $k^*=2(x_{1}+1)$, then
$k^*x=2x(x_1+1)=2x(\lambda^*x+1)$, and so 
$2x(\lambda^*x+1)=2(w-1)$ by $k^*x=2(w-1)$. Combining it with (\ref{eqqq3.7}), then $4p=\lambda^* x^2+2$, i.e., 
\begin{equation}\label{eqq3.6}
xx_1=2(2p-1).
\end{equation}
\indent Now, by (\ref{eqq3.6}), we know $2\mid x$ or $2\mid x_1$. Note that $k^*=2(x_{1}+1)$, then $2\nmid \lambda^*$ since $\gcd(k^*,\lambda^*)=1$. If $2\mid x_1$, then from $x_1=\lambda^*x$ we have $2\mid x$. Therefore $4\mid xx_1$ which contradict to Equation (\ref{eqq3.6}). If $2\mid x$, then $2\mid x_1$ by $x_1=\lambda^*x$. This is also impossible. \vspace{2mm}\\
{\bfseries Claim 2}. $k^*\neq x_1+1$.\\
\indent Suppose that $k^*=x_1+1$, then $k^*x=x(x_1+1)=x(\lambda^*x+1)$, and hence $2x(\lambda^*x+1)=4(w-1)$. By (\ref{eqqq3.7}), we have 
\begin{equation}\label{eq3.6}
	\lambda^*x^2=xx_1=x(k^*-1)=4(p-1).
\end{equation}
Equation (\ref{eq3.55}) gives that  $x=\frac{2(x_1+1)(w-2a+1)}{k^*}=2(w-2a+1)$, and so $x$ is even.\vspace{1mm}\\
\indent We derive the contradiction by (1)-(4) in the following.\\
\indent (1) $x\ge 4$ and $p\ge 11$. In fact, if $x=2$, then $w=2a$. Note that $k=k^*p=p(x_1+1)=a(x_1+1)$, which means that $a=p$. So $w=2p$, $k^*=w-1=2p-1$ and $\lambda^*=\frac{4(p-1)}{x^2}=p-1$. This implies that $k=p(2p-1)$, $\lambda=p(p-1)$ and $k-\lambda=(2p^2-p)-(p^2-p)=p^2$. The parameters can be excluded by \cite{lu}. Moreover, Equation (\ref{eq3.6}) yields $4(p-1)\ge 16\lambda^*\ge 32,$ and thus $p\ge 11$.\vspace{1mm}\\
\indent (2) $\gcd(p,w)=1$ and $\gcd(p,|Out(T)|)=1$, where $T$ is a transitive non-abelian simple group of degree $w$. Using Lemma \ref{l1}(1), we have $\lambda v<k^2$, and so $w^2<(\lambda^*x^2+2x+1)p$. It implies that $4wp+p>w^2+4p^2$. Assume that $p\mid w$, then $w=2p$. This is impossible since $x=2(w-2p+1)=2$.\\
\indent Clearly, if $T$ is a sporadic simple group or an alternating group, then $\gcd(p,|Out(T)|)=1$ since $|Out(T)|=1,2,4$ and $p\ge 11.$ Moreover, if $T=L_2(9)$, then $\gcd(p,|Out(L_2(9))|)=1$. Now, we consider $T$ is the remaining simple group of Lie type over the finite field $\mathbb{F}_{q}$ and $q=p_{1}^{f}$, where $p_{1}$ is a prime. Assume that $p\mid |{Out}(T)|$. Combining it with $p\ge 11$, we have $p \mid f$ when $T\neq U_{n}(q)$ and $L_{n}(q)$. Recall that $x\ge 4,$ then $p\ge k^*$ by Equation (\ref{eq3.6}). So 
\begin{equation*}
	w^{2}=v<k^{2}\le p^{4}\le f^{4}< q^{2},
\end{equation*}
and hence $w<q$. Note that $T$ is a transitive group of degree $w$, then the minimal degree of $T$ is not greater than $w$, and then the minimal degree is less that $q$. This is a contradiction by \cite[Lemma 2.1]{2020}. When 
$T=U_{n}(q)$ or $L_{n}(q)$, the prime number $p$ also probably divides $\gcd(n,q\pm 1)$. It follows that $11\le p\le n$ and $p\le q\pm 1$. Then $w<p^{2} \leq(q+1)^{2}$, and so the minimal degree of $T$ is less than $(q+1)^2$, which is also a contradiction by \cite[Lemma 2.1]{2020}.\vspace{1mm}\\
\indent (3) The primitive group $H$ is of almost simple type such that $G\le H\wr S_2$. Assume that $H$ is not of almost simple type, and so it is a simple diagonal group by the definition of product action type, putting $Soc(H)=T^{m_{1}}$, where $T$ is a non-abelian simple group and $m_{1}\ge 2$. It follows that $w=|T|^{m_1-1}$, and so $v=w^2=|T|^{2(m_1-1)}.$ Note that the point-stabilizer $G_{\alpha}\lesssim (Aut(T) \times S_{m_{1}})\wr S_{2}$, where $\alpha\in \mathcal{P}$. Lemma \ref{l1}(2) yields $k\mid  2m_{1}!^2|Aut(T)|^2,$ and hence $p\mid m_1!\cdot |Aut(T)|$. If $p\mid m_1!$, then $k\le p^2\le m_{1}^2$, and so $|T|^{2(m_{1}-1)}=v<k^2\le m_{1}^4$. Since $T$ is a non-abelian simple group, then $60^{2(m_{1}-1)}\le |T|^{2(m_{1}-1)}< m_{1}^4.$ It means that $m_{1}=1$, contradicting the fact $m_{1}\ge 2$. Hence, $p\mid |Aut(T)|$, by \cite[Lemma 2.4]{2024}, then $p^2\le |T|$. It follows that $|T|^{2(m_{1}-1)}=v<k^2\le p^4\le |T|^2$, and so $m_1=1$, a contradiction.\vspace{1mm}\\
\indent (4) The final contradiction. Now $T_{1}\times T_{2} \unlhd G\le H\wr S_{2}$, where $T_i\cong T(i=1,2)$ is a non-abelian simple group, $H$ is a primitive almost simple group. Let $\alpha \in B$ and $B$ be a block. From Lemma \ref{lemma3.3}(1), we obtain that $k\mid 2|\alpha^{\left(T_{1}\right)_{B}}||\alpha^{\left(T_{2}\right)_{B}}||Out(T)|^2w^4$, and thus $p\mid 2|\alpha^{\left(T_{1}\right)_{B}}||\alpha^{\left(T_{2}\right)_{B}}||Out(T)|^2w^4$. And since $\gcd(p,|Out(T)|^2w^4)=1$, it implies that $p\mid|\alpha^{\left(T_{1}\right)_{B}}||\alpha^{\left(T_{2}\right)_{B}}|$, and hence $p\mid |\alpha^{\left(T_{1}\right)_{B}}|$ or $p\mid |\alpha^{\left(T_{2}\right)_{B}}|$. Noting that $\alpha^{(T_{i})_{B}}\subseteq \alpha^{T_{i}}\cap B$ for $i=1,2$, then $|\alpha^{T_{1}}\cap B|+|\alpha^{T_{2}}\cap B|-2\ge p-1.$ Further, Lemma \ref{lemma3.3}(2) gives that $|\alpha^{T_{1}}\cap B|+|\alpha^{T_{2}}\cap B|-2=\frac{2(k-1)}{w+1}$, and so $\frac{2(k-1)}{w+1}\ge p-1$. Therefore,  
\begin{equation*}
	\dfrac{2p(w-1)}{x}=k\ge \dfrac{(p-1)(w+1)}{2}+1.
\end{equation*}
It follows that $x\le 4$, and hence $x=4$. Therefore, $w=2p+1$, $k^*=p$ and $k=p^2$. This means that $|\alpha^{T_{1}}\cap B|+|\alpha^{T_{2}}\cap B|-2=p-1$, that is,
\begin{equation}\label{eq3.7}
	|\alpha^{T_{1}}\cap B|+|\alpha^{T_{2}}\cap B|=p+1.
\end{equation} 
For each $i=1,2$, the size of $\alpha^{T_i}\cap B$ are factors of $k$, where $|\alpha^{T_i}\cap B|\le w$ and $\frac{k}{|\alpha^{T_i}\cap B|}\le w$. By Equation (\ref{eq3.7}), without loss of generality, let $|\alpha^{T_{1}}\cap B|=1$ and $|\alpha^{T_{2}}\cap B|=p$. It implies that $\frac{k}{|\alpha^{T_1}\cap B|}=p^2$, which is contrary to $\frac{k}{|\alpha^{T_1}\cap B|}\le w=2p+1$. Thus, $k^*\neq x_1+1$.\\
\indent From Claims 1 and 2, we conclude that $k^*=4(x_1+1).$
$\hfill\square$\vspace{3mm}
\begin{proposition}\label{prop3.4}
	If $\mathcal{D}=(\mathcal{P}, \mathcal{B})$ is a symmetric $2$-$(v, k, \lambda)$ design admitting a flag-transitive point-primitive automorphism group $G$, where  $\gcd(k,\lambda)=p$ is a prime, then $G$ is not of product action type.
\end{proposition}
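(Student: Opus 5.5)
By Lemma \ref{lp} it suffices to treat $\ell=2$, so that $G\le H\wr S_2$ acts on $\mathcal{P}=\Gamma^2$ with $v=w^2$. By Corollary \ref{coro1} we are in case (1) of Lemma \ref{prop2.6}, so $(x_1+1)\mid k$, and Lemma \ref{l3.3} then gives $k^*=4(x_1+1)=4(\lambda^*x+1)$. The plan is to turn this into explicit parameters in terms of $p$, $\lambda^*$ and $x$, and then get a contradiction from the structural divisibility in Lemma \ref{lemma3.3}, much as in steps (2)--(4) of Claim 2.

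\textbf{Parameters.} From $k^*x=2(w-1)$ we get $w-1=2x(\lambda^*x+1)$. Substituting into (\ref{eqqq3.7}), the fraction equals $1$, so
\begin{equation*}
\lambda^*x^2=4p-1 .
\end{equation*}
Since $\gcd(k^*,\lambda^*)=1$ and $k^*$ is even, $\lambda^*$ is odd, and then $x$ is odd. From (\ref{eq3.55}) we get $w-2a+1=2x$, where $a=k/(x_1+1)=4p$. Hence
\begin{equation*}
w=8p-1+2x,\qquad k=4p(\lambda^*x+1),\qquad \lambda=p\lambda^*,\qquad x_1=\lambda^*x=\frac{4p-1}{x}.
\end{equation*}

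\textbf{Coprimality.} Next I would show $\gcd(p,w)=1$ and $\gcd(p,|Out(T)|)=1$, exactly as in step (2) of Claim 2. For the first: $p\mid w$ iff $p\mid 2x-1$, and $x<2\sqrt p$ leaves only finitely many small cases to check. For the second: use $w^2=v<k^2$ together with \cite[Lemma 2.1]{2020}. I would also rule out $H$ of simple diagonal type as in step (3).

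\textbf{Bounding $x$.} Lemma \ref{lemma3.3}(1) then forces $p\mid|\alpha^{(T_i)_B}|$ for some $i$. Since $\alpha^{(T_i)_B}\subseteq\alpha^{T_i}\cap B$, and by Corollary \ref{coro1} these intersections have sizes $1$ and $x_1+1$, we get $p\le x_1+1=\frac{4p-1}{x}+1$. This gives $x\le 3$, and since $x$ is odd, $x\in\{1,3\}$.

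\textbf{Case $x=1$.} Here $\lambda^*=4p-1$, $w=8p+1$ and $k=16p^2$. The point set $\alpha^{T_1}\cap B$ of size $1$ has $k/1=16p^2>w$, which contradicts the factor bound $k/|\alpha^{T_i}\cap B|\le w$ used in step (4) of Claim 2. (That bound comes from the partition of $B$ into the sets $B_j$ in the proof of Lemma \ref{prop2.6}(1).)

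\textbf{Case $x=3$ (main obstacle).} Here $9\lambda^*=4p-1$, $w=8p+5$ and $x_1+1=(4p+2)/3<2p$. An orbit of $(T_i)_B$ of size divisible by $p$ inside a set of size $(4p+2)/3$ must have size exactly $p$, so the inequality alone does not close this case. I would first try to show that $\alpha^{T_i}\cap B$ is a union of $(T_i)_B$-orbits of equal size, using that $(T_i)_B$ is normalised by $M_B$, which acts transitively on the relevant $B_j$. That would force $p\mid (4p+2)/3$, hence $p\mid 2$, a contradiction. If this fails, I would fall back on integrality: $9\mid 4p-1$ makes $p\equiv 7\pmod 9$. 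Combined with $\gcd(k^*,\lambda^*)=1$, the parity obstruction from Lemma \ref{lemma2.4}, and $vk\mid|G|$ (which gives $p\mid|Aut(T)|^2$), I would then exclude the remaining $p$.
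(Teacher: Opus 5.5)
Your proof has a genuine gap in the case $x=1$, and that is the decisive case. It contains the arithmetically admissible parameter sets $(25^2,144,33)$ and $(57^2,784,189)$, which the paper removes only after its analysis of $c_1,c_2,|B_{10}|,|B_{20}|$ and a {\sc Magma} search.

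With $|\alpha^{T_1}\cap B|=1$, your ``factor bound'' reads $k\le w$. This does not follow from the partition $B=\bigcup_j B_j$ of Lemma \ref{prop2.6}(1). That partition only gives $k=(x_1+1)\cdot(\text{number of parts})$. The parts lie on distinct Cartesian lines of either direction, so it yields $k/(x_1+1)\le 2w$, which here is $4p\le 16p+2$ and gives no contradiction.

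In fact $k\le w$ never holds in a nontrivial symmetric design, since $k^2>\lambda v\ge w^2$. If the bound were available, it would equally dispose of $x=3$, where one intersection is again a singleton, and of every other case at once. That you still treat $x=3$ as the main obstacle shows the bound is not a valid general principle. In step (4) of Claim 2 it is legitimate only because $k=p^2$ is odd there. Then $|\alpha^{M_B}|=k$, so $B$ itself is evenly fibred by both families of lines. Here $k=16p^2$ is even and $M_B$ need not be transitive on $B$.

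The correct form of the idea closes the gap, and in fact settles the whole proposition once Lemma \ref{lp} and Corollary \ref{coro1} are available:
\begin{itemize}
\item Since $|G_B:M_B|\le 2$, we have $|\alpha^{M_B}|\ge k/2$.
\item $M_B\le H\times H$ preserves the partition of $\mathcal{P}$ into the lines $\Gamma\times\{c\}$ and is transitive on $\alpha^{M_B}$. So all nonempty intersections of $\alpha^{M_B}$ with these lines have size $c_1$, and $|\alpha^{M_B}|=c_1|B_{10}|$; this is the relation used in the paper's proof.
\item Corollary \ref{coro1} gives $c_1\le|\alpha^{T_1}\cap B|=1$.
\item Hence $k/2\le |B_{10}|\le w$. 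For $x=1$ this reads $8p^2\le 8p+1$, which is false.
\item In general, $k\le 2w$ together with $\lambda(w^2-1)=k(k-1)$ forces $\lambda\le 3$, contradicting $\lambda=p\lambda^*\ge 6$.
\end{itemize}

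Separately, your coprimality and ``diagonal $H$'' steps cannot simply be copied from steps (2)--(3) of Claim 2, because those rely on $k\le p^2$. Here $k$ can be as large as $16p^2$, and $p\in\{3,5,7\}$ is possible. Diagonal $H$ can instead be excluded because $w=8p-1+2x$ is odd while $|T|^{m_1-1}$ is even.
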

{\bfseries Proof}. Suppose that $G$ is of product action type, by Lemma \ref{lp}, we have $G\le H\wr S_2$, where $H$ is of almost simple or diagonal type on the $\Gamma$ of size $w$. Lemma \ref{l3.3} gives that $k^*=4(x_1+1)$, we get $k^*x=4x(x_1+1)=2(w-1)$, and so $w-1=2x(x_1+1)=2x(\lambda^*x+1).$ It follows from Equation (\ref{eqqq3.7}) that
\begin{equation}\label{eqqq3.77}
	xx_1=\lambda^*x^2=4p-1.
\end{equation}
 It is easy to see that $\lambda^*$ and $x$ are both odd numbers.\\
  \indent The equation $\lambda^*(v-1)=k^*(k-1)$ gives that $k^*\mid (v-1)$, so that $v$ is odd. From Lemma \ref{lemma2.4} and the previous statements, we obtain that $(|B^M|,|\alpha^{M_B}|)=(v,\frac{k}{2})$. Note that $|\alpha^{M_B}|=\frac{k}{2}$, that is, $M_{B}$ is not transitive on $B$. Recall that $c_1=\left|\alpha^{T_1} \cap \alpha^{M_B}\right|$ and  $c_2=\left|\alpha^{T_2} \cap \alpha^{M_B}\right|.$
   Using Lemma \ref{l2}, we get $$x_1=c_1+c_2-2.$$
   In the following, without loss of generality, we assume that $c_1<c_2$ since $x_1$ is odd, and thus $c_1\le \frac{x_1+1}{2}$.
    Moreover, $B_{10}=\{\alpha_{2}|(\alpha_{1},\alpha_{2})\in \alpha^{M_{B}}\}$,
$ B_{20}=\{\alpha_{1}|(\alpha_{1},\alpha_{2})\in \alpha^{M_{B}}\}.$ Then  $\frac{k}{2}=c_1|B_{10}|=c_2|B_{20}|$, this implies that $c_1|B_{10}||B_{20}|=\frac{k}{2}|B_{20}|$ and $c_2|B_{10}||B_{20}|=\frac{k}{2}|B_{10}|$. Adding the two previous equations, we conclude that $|B_{10}||B_{20}|(c_1+c_2)=\frac{k}{2}(|B_{10}|+|B_{20}|)$, and so \begin{equation}\label{eq3.9}
 	|B_{10}||B_{20}|(x_1+2)=2p(x_1+1)(|B_{10}|+|B_{20}|).
 \end{equation}
 \indent Obviously, $p>x$. In fact, if $p\le x$, then $4p-1=\lambda^*x^2\ge 2p^2$, which contradicts $p\ge 3$. Moreover, since $k=k^*p$, then $\frac{x_1(w+1)}{2}+1=4p(x_1+1)$, i.e.,  $(\frac{w+1}{2}-4p)x_1=4p-1$. Note that $x_1=\lambda^*x$ is odd and $\lambda^*\ge 2$, then $4p-1\ge 3(\frac{w+1}{2}-4p)$, and so $32p-5\ge 3w$. It follows that $w\le 10p$. \vspace{2mm}\\
 {\bfseries Claim}. $\gcd(p,x_1+2)=1$ and $\gcd(p,|B_{20}|)=1$.\\
 \indent If $p\mid (x_1+2)$, then $x_1+2=\lambda^*x+2=pt_1$ for some integer $t_1$. By (\ref{eqqq3.77}), we have 
 \begin{equation}\label{eq3.10}
 	xt_1p+1=2x+4p.
 \end{equation}
 Now $p>x$ forces $xt_1\le 5.$ On the other hand, by (\ref{eq3.10}), $(xt_1-4)p=2x-1$, this implies that $xt_1\ge 5$. So, $xt_1=5$, and thus $x=1$ or $5$. These are impossible as $p=2x-1$ is not a prime.\\
 \indent Let $|B_{20}|=ps$ for some integer $s$. Recall that $c_1\le \frac{x_1+1}{2}$, then $c_2\ge \frac{x_1+3}{2}>\frac{x_1+1}{2}$. So, 
 \begin{equation*}
 	2(x_1+1)=sc_2>s \left(\frac{x_1+1}{2}\right),
 \end{equation*}
 and hence $s\le 3$.\\
 \indent If $s=1$, then $c_2=2(x_1+1)$, contrary to $c_1+c_2=x_1+2>c_2.$ If $s=2$, then $c_2=x_1+1$ and $c_1=1$. This implies that $|B_{10}|=\frac{k}{2}=\frac{x_1(w+1)+2}{4}\le w$, and so $x_1\le 3.$ On the other hand, $x_1=\lambda^*x\ge 2x$ and $x_1$ is odd, then $x_1=3$. It follows that $x=1$, contradicts Equation (\ref{eqqq3.77}). Therefore, $s=3$ and $c_2=\frac{2(x_1+1)}{3}$, $c_1=\frac{x_1+4}{3}$. Note that $\frac{k}{2}=2p(x_1+1)=c_1|B_{10}|$, then $p\mid c_1|B_{10}|$. If $p\mid c_1$, then $c_1=pz$ for some integer $z$, and thus $pz=\frac{x_1+4}{3}$. Combining it with (\ref{eqqq3.77}), we have
 \begin{equation}\label{eq3.11}
 	3xzp+1=4x+4p.
 \end{equation} 
Recall that $p>x$, then $xz\le 2$, and so $x=z=1$ by $xz$ is odd. But they do not satisfy (\ref{eq3.11}). Hence, $p\mid |B_{10}|$. Let $|B_{10}|=pt$ for some integer $t.$ Taking it and $c_1=\frac{x_1+4}{3}$ into $2p(x_1+1)=c_1|B_{10}|$, we get 
 $t(x_1+4)=6(x_1+1)$. 
 It is easy to know that $(t,x_1)=(3,2), (4,5)$ or $(5,14)$. Since $x_1$ is odd, then $x_1=5$. Recall that $x_1=\lambda^*x$ and $\lambda^*\neq 1$, then $x=1$ and $\lambda^*=5$. However, no positive integer $p$ satisfying (\ref{eqqq3.77}). Hence the claim is proved.\vspace{1mm}\\
 \indent Equation (\ref{eq3.9}) yields $p\mid |B_{10}||B_{20}|(x_1+2)$. Combined it with Claim, we obtain that $p\mid |B_{10}|$. Assume that $|B_{10}|=pt$ for some integer $t$. From $\frac{k}{2}=2p(x_1+1)=tpc_1$, we get $2(x_1+1)=tc_1$. Recall that $c_1\le \frac{x_1+1}{2}$, we know $t=\frac{2(x_1+1)}{c_1}\ge 4.$ On the other hand, $|B_{10}|=pt\le w\le 10p$, so $t\le 10$. Hence, 
 \begin{equation}\label{eqqq3.9}
 	c_1=\dfrac{2(x_1+1)}{t} ~\text{and}~ c_2=\dfrac{x_1(t-2)+2(t-1)}{t},
 \end{equation}
 where $4\le t\le 10.$\\
 \indent For $4\le t\le 10$, there is a positive integer $y$ such that $c_2=py$ as $p\mid c_2|B_{20}|$ and $\gcd(p,|B_{20}|)=1$. It follows from (\ref{eqqq3.9}) that $py=\frac{x_1(t-2)+2(t-1)}{t}$. Combining it with (\ref{eqqq3.77}), we conclude that 
 \begin{equation}\label{eqqq3.10}
 	xytp+(t-2)=(2t-2)x+(4t-8)p.
 \end{equation}
Recall that $p>x$, it follows that $(6-xy)t>10$. And since $4\le t\le 10,$ then $xy\le 4$. Note that $x$ is odd, we obtain all possibilities of $(t,x,y,p)$ that satisfying Equation (\ref{eqqq3.10}), i.e., $(t,x,y,p)=(5,3,1,7), (6,1,3,3),$ $(7,1,3,7)$, $(6,3,1,13)$ or $(7,3,1,31).$ The last two cases are ruled out since $\lambda^*=\frac{4p-1}{x^2}$ is an integer by (\ref{eqqq3.77}). For the remaining three possibilities, we obtain the parameters $(v,k,\lambda)=(61^2,280,21)$, $(25^2,144,33)$ or $(57^2,784,189)$, respectively.\\
\indent Now we rule out these three possibilities with the help of the computer algebra system {\sc Magma} \cite{magma}. If $(v,k,\lambda)=(61^2,280,21)$, we search for the possible candidates for the group $H$ by the command {\tt H:=PrimitiveGroups(w)} where $w=61$. It follows that the primitive group $H$ is $A_{61}$ or $S_{61}$, and so $G=H\wr S_2$. Then for these cases $G=A_{61}\wr S_2$ or $S_{61}\wr S_2$, we find the subgroups $G_1$ with index $v$ which are the candidates for the block-stabilizer $G_{B}$. For each candidates $G_1$ for $G_{B}$, by the command {\tt Orbits($G_1$)}, we get the lengths of the 
$G_{B}$-orbits acting on $\mathcal{P}$ are  $1, 120$ and $3600$. This contradicts the flag-transitivity of $G$ due to $G_B$ has at least one orbit with size $k=280$. Similarly, if $(v,k,\lambda)= (25^2,144,33)$, then the lengths of the $G_{B}$-orbits are $1, 48$ and $576$, a contradiction. For the case $(v,k,\lambda)=(57^2,784,189)$, it follows that $H=PSL_2(19)$, $PSL_3(7)$, $PSL_3(7).3$, $A_{57}$ or $S_{57}$. Clearly, $H\neq A_{57}, S_{57}$. For the remaining groups $H$, let $K=H\wr S_2$, we can determine the orbit lengths of $K_B$. Note that $K$ is flag-transitive, then there exists at least one orbit $O$ of $K_B$ with size $k=784$, and so  
$K=PSL_3(7)\wr S_2$ or $(PSL_3(7).3)\wr S_2$. But for each $K$, the structure $(\mathcal{P},O^{K})$ is not a $2$-design. Since $G\le K\le Sym(\mathcal{P})$, by Lemma \ref{ll2.2}, we know that there is no $2$-$(57^2,784,189)$ design admitting a flag-transitive automorphism group $G$.\\
  \indent In short, by analyzing above, $G$ is not of product action.$\hfill\square$\vspace{3mm}\\
{\bf Proof of Theorem \ref{TH1}} It follows immediately from Propositions \ref{prop3.11}, \ref{prop3.1} and \ref{prop3.4}.
$\hfill\square$\vspace{3mm}\\
	{\bf {Declaration of competing interest}}\\
	\indent The authors declare that there is no conflict of interest in this paper.\vspace{3mm}\\
	{\bf {Data availability}}\\
	\indent Data sharing not applicable to this article as no datasets were generated or analyzed during the current study.\vspace{3mm}\\
	{\bf {Acknowledgements}}\\
	\indent The authors are grateful to Professor A. Montinaro for valuble suggestion on the proof of Lemma \ref{prop2.6}.


\begin{thebibliography}{99}
	\bibitem{magma}	W. Bosma, J. Cannon, C. Playoust, The Magma algebra system. I. The user language, J. Symbolic Comput., 24(3-4)(1997), 235-265.
		
		
	\bibitem{ca}P. J. Cameron, C. E. Praeger,
	Block-transitive $t$-designs. I. point-imprimitive designs, Discrete Math., 118(1-3)(1993), 33-43.
	
	\bibitem{chen}J. F. Chen, J. X. Shen, S. L. Zhou, Reduction for flag-transitive symmetric designs with $k>\lambda(\lambda-2)$, Discrete Math., 346(10)(2023), 113557.
		
    \bibitem{Dav1} H. Davies,  Flag-transitivity and primitivity, Discrete Math., 63(1)(1987), 91-93.
		
	\bibitem{Dembowski} P. Dembowski, Finite Geometries, Springer-Verlag, New York, 1968.			
		
	\bibitem{Osifodunrin}E. S. Lander, Symmetric designs: an algebraic approach, London Mathematical Society Lecture Note Series, vol. 74., Cambridge University Press, Cambridge, 1983.
	
	\bibitem{li}M. W. Liebeck, C. E. Praeger, J. Saxl, On the O'Nan-Scott theorem for finite primitive permutation groups, J. Austral. Math. Soc. Ser. A,  44(3)(1988), 389-396.
	
	\bibitem{lu}Z. W. Lu, S. L. Zhou, Flag-transitive point-primitive symmetric 2-designs of prime square order,
	J. Algebra Appl., 25(8)(2026), 2650069.
	
	
    \bibitem{regueiro}E. O'Reilly-Regueiro, On primitivity and reduction for flag-transitive symmetric designs, J. Combin. Theory Ser. A, 109(1)(2005), 135-148.
	
	\bibitem{2023}E. O'Reilly-Regueiro, J. E.  Rodr{\'i}guez-Fitta, A parametrisation for symmetric designs admitting a flag-transitive, point-primitive automorphism group with a product action, Ars Math. Contemp., 23(2)(2023), 4-18.
		
	\bibitem{100}L. Tang, J. Chen, Z. L. Zhang,
	Classification of flag-transitive point-primitive non-symmetric $2$-$(v,k,\lambda)$ designs with $v<100$.
	Appl. Math. Comput., 430(2022), 127278.
		
		
	\bibitem{zhao} Y. W. Zhao, S. L. Zhou, Reduction for flag-transitive $2$-$(v,k,\lambda )$  designs with $(r,\lambda)=2$, J. Combin. Des., 29(9)(2021), 629-643.
		
	\bibitem{2020}Y. L. Zhang, Z. L. Zhang, S. L. Zhou, Reduction for primitive flag-transitive symmetric $2$-$(v,k,\lambda)$ designs with $\lambda$ prime, Discrete Math., 343(6)(2020), 111843.
	
	\bibitem{2024}Y. L. Zhang, J. F. Chen, 
	Reduction for flag-transitive point-primitive $2$-$(v,k,\lambda)$ designs with $\lambda$ prime,
	J. Combin. Des., 32(2)(2024), 88-101.
	
	
	\bibitem{on} C. Y. Zhong, S. L. Zhou, On flag-transitive automorphism groups of $2$-designs, Discrete Math., 346(2)(2023), 113227.
		
	\bibitem{1988}P. H. Zieschang, Flag transitive automorphism groups of $2$-designs with $(r,\lambda)=1$,
J. Algebra, 118(2)(1988), 369-375.		
	\end{thebibliography}
\end{document}